\DeclareFontFamily{OT1}{pzc}{}
  \DeclareFontShape{OT1}{pzc}{m}{it}{<-> s * [1.200] pzcmi7t}{}
  \DeclareMathAlphabet{\mathpzc}{OT1}{pzc}{m}{it}
\newcommand{\N}{\mathbb{N}}
\newcommand{\Z}{\mathbb{Z}}
\newcommand{\Q}{\mathbb{Q}}
\newcommand{\C}{\mathbb{C}}
\newcommand{\F}{\mathbb{F}}
\renewcommand{\P}{\mathbb{P}}
\newcommand{\G}{\mathbb{G}}
\DeclareFontFamily{U}{wncy}{}
    \DeclareFontShape{U}{wncy}{m}{n}{<->wncyr10}{}
    \DeclareSymbolFont{mcy}{U}{wncy}{m}{n}
    \DeclareMathSymbol{\Sha}{\mathord}{mcy}{"58} 
\newcommand{\Spec}{\ensuremath{\operatorname{Spec}}}
\newcommand{\orb}{\mathcal{O}}
\newcommand*\cat[1]{{\tt #1}}
\newcommand{\Sym}{\ensuremath{\operatorname{Sym}}}
\newcommand{\id}{\ensuremath{\operatorname{id}}}
\newcommand{\A}{\mathbb{A}}
\renewcommand{\div}{\ensuremath{\operatorname{div}}}
\newcommand{\W}{\mathbb{W}}
\newcommand{\eff}{\ensuremath{\operatorname{eff}}}
\newcommand{\Conf}{\ensuremath{\operatorname{Conf}}}
\newcommand{\supp}{\ensuremath{\operatorname{supp}}}
\newcommand{\Hilb}{\ensuremath{\operatorname{Hilb}}}
\renewcommand{\epsilon}{\varepsilon}
\newcommand{\legen}[2]{\left (\frac{#1}{#2}\right )}
\newtheorem{thm}{Theorem}[section]
\newtheorem{prop}[thm]{Proposition}
\newtheorem{lem}[thm]{Lemma}
\newtheorem{defn}[thm]{Definition}
\newtheorem{cor}[thm]{Corollary}
\newtheorem{rem}[thm]{Remark}
  \let\oldrem\rem
  \renewcommand{\rem}{\oldrem\normalfont}
\newtheorem{question}[thm]{Question}
\newtheorem{problem}[thm]{Problem}
  \let\oldex\ex
  \renewcommand{\ex}{\oldex\normalfont}
\begin{document}




\title{Arithmetic Functions and Geometry}
\author{Andrew Kobin}
\date{}

\maketitle



\rhead{\thepage}
\cfoot{}

\begin{abstract}
In this expository note, we revisit several classical arithmetic functions - namely Euler's totient function, the divisor sum functions and Dedekind’s $\psi$-function - within a unifying algebraic framework that highlights their connections to geometry. This framework builds on prior work involving zeta functions and M\"{o}bius inversion. While our main goal is to provide a clear context for similar constructions in the future, we also make an original observation regarding Dedekind’s $\psi$-function. 
\end{abstract}

\section{Introduction}

\vspace{0.1in}
In a recent article \cite{kob}, we gave a description of various zeta functions in number theory and algebraic geometry using a more abstract algebraic framework. This framework generalizes some classical notions in the theory of arithmetic functions, including the M\"{o}bius inversion principle (also see \cite{koc} for part of the story). 

In this note, we give a similar treatment to several other arithmetic functions, including the totient function (Section~\ref{sec:totient}), the divisor sum functions (Section~\ref{sec:sigmak}) and Dedekind's $\psi$-function (Section~\ref{sec:psi}). A key theme is that these functions have certain geometric origins, which we will begin discussing in Section~\ref{sec:fromgeom}. 

Many of the ideas present in this article are well-known to experts and we make no claims of originality, with one exception in Section~\ref{sec:psi}. Instead, our aim is to tell the story of these arithmetic functions through a modern, often geometric lens and motivate future study of similar functions (see Section~\ref{sec:other}).

\subsection{Generalities on arithmetic functions}

\vspace{0.1in}
Classically, an {\it arithmetic function} is any function $f : \N\rightarrow\C$, possibly with some conditions such as weak multiplicativity that we will not adopt here. To every such $f$, we can assign a Dirichlet series 
$$
F(s) = \sum_{n = 1}^{\infty} \frac{f(n)}{n^{s}}, \qquad s\in\C,
$$
which encodes the arithmetic of $f$ complex-analytically. For example, the arithmetic function $\zeta : \N\rightarrow\C$ defined by $\zeta(n) = 1$ for all $n\in\N$ corresponds to the Riemann zeta function $\zeta_{\Q}(s)$. 

One thing that makes the set of arithmetic functions interesting is that it admits the structure of a commutative $\C$-algebra by defining 
$$
(f*g)(n) = \sum_{ij = n} f(i)g(j),
$$
called the {\it Dirichlet convolution} of $f$ and $g$. More intuitively, $f*g$ is defined by ordinary multiplication of Dirichlet series: 
$$
\left (\sum_{n = 1}^{\infty} \frac{f(n)}{n^{s}}\right )\left (\sum_{n = 1}^{\infty} \frac{g(n)}{n^{s}}\right ) = \sum_{n = 1}^{\infty} \frac{(f*g)(n)}{n^{s}}. 
$$

\subsection{Abstract arithmetic functions}

\renewcommand{\P}{\mathcal{P}}

The $\C$-algebra of classical arithmetic functions is merely a special case of the more general notion of an {\it incidence algebra}, as surveyed in \cite{kob,koc}. In the most general version of the story, an algebra of ``objective arithmetic functions'' can be attached to any locally finite decomposition space (or locally finite $2$-Segal space). For concreteness, we restrict the discussion here to locally finite monoids and their vector space incidence algebras. 

Fix a field $k$. For a monoid $M$, the {\it incidence algebra} $I(M)$ is defined to be the $k$-vector space of functions $f : M\rightarrow k$ equipped with a convolution product $*$ defined by 
$$
(f*g)(x) = \sum_{ab = x} f(a)g(b)
$$
for all $x\in M$. When $M$ is a locally finite monoid\footnote{$M$ is {\it locally finite} if for each $x\in M$ there are only finitely many $a,b\in M$ with $ab = x$.}, this gives $I(M)$ the structure of an associative, unital $k$-algebra, with unit $\delta(x) = \delta_{x,1}$, where $1$ is the identity element of $M$. 

Each locally finite monoid $M$ admits a {\it zeta function} $\zeta\in I(M)$ defined by $\zeta(x) = 1$ for all $x\in M$. M\"{o}bius inversion asserts that $\zeta$ is invertible in $I(M)$, with inverse given by 
$$
\mu(x) = \begin{cases}
    1, &\text{if } x = 1\\
    -\sum_{ab = x} \mu(a), &\text{if } x\not = 1. 
\end{cases}
$$
The usual application of M\"{o}bius inversion is to invert a relation between two functions $f,g\in I(M)$, e.g.~convolving both sides of $f = g*\zeta$ with $\mu$ yields $f*\mu = g$, or: 
\begin{equation}\label{eq:mobius}
f(x) = \sum_{ab = x} g(a) \quad\text{implies}\quad g(x) = \sum_{ab = x} f(a)\mu(b). 
\end{equation}
The goal of this short note is to highlight several pairs of arithmetic functions $(f,g)$ satisfying formula (\ref{eq:mobius}) and extend them to a more general arithmetic geometric context.

\subsection{Arithmetic functions coming from geometry}
\label{sec:fromgeom}

Many arithmetic functions ``come from geometry'', a phrase which we will not give precise meaning to. Instead, we will explore the geometric flavor of several examples, even when this connection is not typically mentioned in the literature. 

To begin with, the functions $\zeta$ and $\mu$ that form the basis for M\"{o}bius inversion have well-known geometric interpretations. The Riemann zeta function $\zeta_{\Q}(s)$ can be viewed as the zeta function $\zeta_{\Spec\Z}(s)$ attached to the scheme $\Spec\Z$ and more generally, Dedekind zeta functions of number fields and Hasse--Weil zeta functions of varieties can be interpreted as zeta functions for schemes over $\Spec\Z$. This often provides a geometric explanation for relations between these different functions; cf.~\cite{kob}. 

In fact, these zeta functions are mere shadows\footnote{In technical terms, they are decategorifications of generating functions whose coefficients lie in some Grothendieck ring.} of an abstract zeta function, such as Kapranov's motivic zeta function 
$$
Z_{mot}(X,t) = 1 + \sum_{n = 1}^{\infty} [\Sym^{n}X]t^{n},
$$
which lies in the power series ring $1 + tK_{0}(\cat{Sch}_{X})[[t]]$, where $K_{0}(\cat{Sch}_{X})$ is the Grothendieck ring of the category of $X$-schemes. Here, $\Sym^{n}X$ denotes the $n$th symmetric power of $X$, parametrizing unordered $n$-tuples of points in $X$. Choosing $X$ appropriately and applying a motivic measure to the coefficients of $Z_{mot}(X,t)$ recovers the more familiar zeta functions in the previous paragraph. 

Like $\zeta$, the classical M\"{o}bius function $\mu : \N\rightarrow\C$ is an element of the incidence algebra $I(\N^{\times})$, where $\N^{\times}$ is the multiplicative monoid of natural numbers. It too is a shadow of a motivic generating function, namely 
$$
M_{mot}(X,t) = 1 + \sum_{\mu} (-1)^{||\mu||}[\Sym^{\mu}X]t^{|\mu|} = 1 + \sum_{n = 1}^{\infty} \sum_{\mu\vdash n} (-1)^{||\mu||}[\Sym^{\mu}X]t^{n}
$$
where the sum is over all partitions $\mu = (\mu_{1},\ldots,\mu_{k})$ of natural numbers $n\in\N$, with $|\mu| := \sum_{i = 1}^{k} \mu_{i} = n$, $||\mu||$ equal to the number of distinct elements of $\mu$ and $\Sym^{\mu}X := \coprod_{i = 1}^{k} \Sym^{\mu_{i}}X$. This can be rewritten \cite[Prop.~3.7]{vw} 
$$
M_{mot}(X,t) = 1 + \sum_{\mu} (-1)^{||\mu||}[\Conf^{\mu}X]t^{|\mu|},
$$
where $\Conf^{\mu}X\subseteq\Sym^{\mu}X$ is a {\it configuration space}, i.e.~the open subscheme parametrizing distinct tuples of points. 
For a lengthier discussion of generating functions involving configuration spaces, see \cite[Sec.~5]{vw}. 

\begin{rem}
As an amusing sidenote, the other famous mathematical object named for August Ferdinand M\"{o}bius, the {\it M\"{o}bius strip}, is also a configuration space: $\Conf^{2}(S^{1})$. 
\end{rem}

Another example worth mentioning here is the partition function $p(n)$, which counts the number of ways of writing $n$ as a sum of positive integers. On the geometric side, the values of $p(n)$ are encoded in the Hilbert schemes $\Hilb^{n}\A^{2}$, parametrizing collections of $n$ unordered points\footnote{To be more accurate, while $\Sym^{n}\A^{2}$ parametrizes $n$ unordered points in the plane, $\Hilb^{n}\A^{2}$ parametrizes such points \emph{together with an infinitesimal structure when points collide}. In fancier terms, the natural map $\Hilb^{n}\A^{2}\rightarrow\Sym^{n}\A^{2}$ is a desingularization.} in $\A^{2}$. Specifically, $p(n) = \chi(\Hilb^{n}\A^{2})$, where $\chi$ denotes the Euler characteristic. We can thus view the partition generating function $\sum_{n = 0}^{\infty} p(n)t^{n}$, originally studied by Euler, as a shadow of the motivic generating function 
$$
P_{mot}(\A^{2},t) = 1 + \sum_{n = 1}^{\infty} [\Hilb^{2}\A^{2}]t^{n}. 
$$
For a version of this relation when $\A^{2}$ is replaced by a smooth surface $X$, see \cite{got}.

\subsection{Overview}

\renewcommand{\P}{\mathbb{P}}

For the rest of this article, we will highlight more arithmetic functions and discuss their possible geometric origins. We summarize things in the following table. 
\begin{center}
\begin{tabular}{|c|c|c|c|}
    \hline
    Function & Symbol & Formula & Geometric interpretation\\
    \hline
    zeta & $\zeta$ & $\zeta(n) = 1$ & symmetric powers\\
    (Sec.~\ref{sec:fromgeom}) & & & \\
    \hline
    $\begin{matrix} \text{M\"{o}bius} \\ \text{(Sec.~\ref{sec:fromgeom})}\end{matrix}$ & $\mu$ & $\mu(n) = \begin{cases}
        1, &\text{if } n = 1\\
        -\sum_{ab = n} \mu(a), &\text{if } n\not = 1
    \end{cases}$ & configuration spaces\\
    \hline
    partition & $p$ & $p(n) = \#\{\lambda\vdash n\}$ & Hilbert schemes\\
    (Sec.~\ref{sec:fromgeom}) & & & \\
    \hline
    totient & $\phi$ & $\phi(n) = \#\{1\leq r\leq n\mid (r,n) = 1\}$ & multiplicative group $\G_{m}$\\
    (Sec.~\ref{sec:totient}) & & & \\
    \hline
    divisor sum & $\sigma_{1}$ & $\sigma_{1}(n) = \sum_{d\mid n} d$ & projective line $\P^{1}$\\
    (Sec.~\ref{sec:sigmak}) & $\sigma_{m}$ & $\sigma_{m}(n) = \sum_{d\mid n} d^{m}$ & reducible scheme $\A^{m}\amalg *$\\
    \hline
    psi & $\psi$ & $\psi(n) = n\prod_{p\mid n} (1 + p^{-1})$ & Witt vector construction\\
    (Sec.~\ref{sec:psi}) & & & \\
    \hline
    lambda & $\lambda$ & $\lambda(n) = (-1)^{\Omega(n)}$ & ??\\
    (Sec.~\ref{sec:lambda}) & & & \\
    \hline
    sum of squares & $r_{2}$ & $r_{2}(n) = \#\{(x,y) \mid n = x^{2} + y^{2}\}$ & ??\\
    (Sec.~\ref{sec:sumofsquares}) & & & \\
    \hline
\end{tabular}
\end{center}


\section{Totient Functions}
\label{sec:totient}

Euler's {\it totient function} $\phi : \N\rightarrow\Z$ was originally defined as: 
$$
\phi(n) = \#\{1\leq r\leq n\mid (r,n) = 1\}. 
$$
Alternatively, $\phi$ can be defined recursively by 
\begin{equation}\label{eq:totientrecur}
n = \sum_{d\mid n} \phi(d), 
\end{equation}
or $\id = \phi*\zeta$ as arithmetic functions, where $\zeta(n) = 1$ is the zeta function in the incidence algebra $I(\N^{\times})$. Then it is immediate from M\"{o}bius inversion that: 
\begin{equation}\label{eq:classicalmobius}
\phi(n) = \sum_{d\mid n} d\mu\left (\frac{n}{d}\right )
\end{equation}
where $\mu$ is the classical M\"{o}bius function. That is, $\phi = \id*\mu$ as arithmetic functions. 

Another way of packaging this information uses the generating function of $\phi$, revealing a connection to the Riemann zeta function. 

\begin{prop}
\label{prop:phizetaformula}
As formal Dirichlet series, 
$$
\sum_{n = 1}^{\infty} \frac{\phi(n)}{n^{s}} = \frac{\zeta_{\Q}(s - 1)}{\zeta_{\Q}(s)}. 
$$
\end{prop}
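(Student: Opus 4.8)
The plan is to compute the Dirichlet series of $\phi$ directly from the convolution identity $\phi = \id * \mu$ established just above in equation~(\ref{eq:classicalmobius}), using the fact that Dirichlet convolution corresponds to multiplication of Dirichlet series. First I would recall that the arithmetic function $\id : \N \to \C$ given by $\id(n) = n$ has Dirichlet series $\sum_{n=1}^{\infty} n/n^{s} = \sum_{n=1}^{\infty} 1/n^{s-1} = \zeta_{\Q}(s-1)$, at least as a formal identity of Dirichlet series (and convergent for $\re(s) > 2$). Similarly, the classical M\"{o}bius function $\mu$ is the Dirichlet inverse of $\zeta$, so its Dirichlet series is $1/\zeta_{\Q}(s)$; this is exactly M\"{o}bius inversion as recalled in the introduction, applied in the incidence algebra $I(\N^{\times})$.

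Next I would invoke the multiplicativity of the Dirichlet series assignment: since $\phi = \id * \mu$, the associated Dirichlet series satisfy
$$
\sum_{n=1}^{\infty} \frac{\phi(n)}{n^{s}} = \left( \sum_{n=1}^{\infty} \frac{\id(n)}{n^{s}} \right)\left( \sum_{n=1}^{\infty} \frac{\mu(n)}{n^{s}} \right) = \zeta_{\Q}(s-1) \cdot \frac{1}{\zeta_{\Q}(s)} = \frac{\zeta_{\Q}(s-1)}{\zeta_{\Q}(s)},
$$
which is the claimed formula. The only thing to be careful about is the phrase ``as formal Dirichlet series'' in the statement: the identity can be checked coefficient-by-coefficient, so no analytic convergence is needed, though one may remark that everything converges in the half-plane $\re(s) > 2$ if an analytic statement is desired.

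Honestly, there is no real obstacle here — the proposition is essentially a restatement of $\phi = \id * \mu$ under the dictionary between Dirichlet convolution and multiplication of Dirichlet series, both of which are already in hand from the discussion preceding the statement. The only mild subtlety worth spelling out is the identification of $\sum n^{-(s-1)}$ with $\zeta_{\Q}(s-1)$ (a trivial reindexing) and the identification of $\sum \mu(n) n^{-s}$ with $\zeta_{\Q}(s)^{-1}$, which is just the Euler-product or M\"{o}bius-inversion computation; I would state these as two short lemmas-in-passing rather than belabor them.
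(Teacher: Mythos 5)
Your proposal matches the paper's proof: both derive the identity directly from $\phi = \id*\mu$ by multiplying the Dirichlet series $\zeta_{\Q}(s-1)$ for $\id$ and $\zeta_{\Q}(s)^{-1}$ for $\mu$. The extra remarks on formal versus analytic convergence are fine but not needed for the argument.
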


\begin{proof}
By Dirichlet convolution, 
$$
\sum_{n = 1}^{\infty} \frac{\phi(n)}{n^{s}} = \left (\sum_{n = 1}^{\infty} \frac{n}{n^{s}}\right )\left (\sum_{n = 1}^{\infty} \frac{\mu(n)}{n^{s}}\right ) = \zeta_{\Q}(s - 1)\mu_{\Q}(s) = \zeta_{\Q}(s - 1)\zeta_{\Q}(s)^{-1}. 
$$
\end{proof}

The classical totient function has several alternative descriptions which are ripe for generalization. For a ring $A$, let $A^{\times}$ denote the group of units of $A$ under multiplication. Then $\phi(n) = (\Z/n\Z)^{\times}$. Even better, Sun Tzu's Theorem\footnote{This is sometimes called the Chinese Remainder Theorem. Sun Tzu (also written Sun Zi or Sunzi) is not to be confused with the author of the same name.} says that if $n = p_{1}^{k_{1}}\cdots p_{r}^{k_{r}}$ for distinct primes $p_{i}$ and $k_{i}\geq 1$, 
$$
(\Z/n\Z)^{\times} \cong (\Z/p_{1}^{k_{1}}\Z)^{\times}\times\cdots(\Z/p_{r}^{k_{r}}\Z)^{\times}. 
$$
In particular, $\phi$ is a (weakly) multiplicative arithmetic function. One also has that for any prime $p$ and $k\geq 1$, $\phi(p^{k}) = p^{k - 1}(p - 1) = p^{k}\left (1 - \frac{1}{p}\right )$, implying the following product formula: 
\begin{equation}\label{eq:totientprod}
\phi(n) = n\prod_{p\mid n} \left (1 - \frac{1}{p}\right )
\end{equation}
where the product runs over all prime factors of $n$.


\subsection{Totient function of a number field}
\label{sec:dedekindtotient}

Let $K/\Q$ be a number field with ring of integers $\orb_{K}$ and let $I_{K}^{\times}$ denote the monoid of nonzero ideals of $\orb_{K}$ under multiplication. The ideal norm $N_{K/\Q}$ induces a morphism of monoids $N : I_{K}^{+}\rightarrow\N$ which can also be viewed as an arithmetic function on $I_{K}^{\times}$ by composing with a fixed map $\N\rightarrow k$, usually the homomorphism sending $1\mapsto 1$. We will abuse notation and write $N$ for both versions. 

Recall that for an ideal $\frak{a}\in I_{K}^{+}$, $N(\frak{a}) = \#(\orb_{K}/\frak{a})$. If we think of $N(\frak{a})$ as a generalization of a number $n = \#(\Z/n\Z)$, then a reasonable generalization of $\phi(n) = \#(\Z/n\Z)^{\times}$ is the following. 

\begin{defn}
The {\bf totient function of a number field} $K/\Q$ is the function $\phi_{K} : I_{K}^{+}\rightarrow\Z$ defined by 
$$
\phi_{K}(\frak{a}) = \#(\orb_{K}/\frak{a})^{\times}. 
$$
\end{defn}

Viewing $\phi_{K}$ as an arithmetic function in the incidence algebra $I(I_{K}^{\times})$ allows us to determine many of its properties and relations to other arithmetic functions. 

Recall that two ideals $\frak{a}$ and $\frak{b}$ of a ring $A$ are {\it relatively prime} if $\frak{a} + \frak{b} = A$. Applying Sun Tzu's Theorem to the ring $\orb_{K}/\frak{ab}$ shows that $\phi_{K}$ is multiplicative, i.e.~for any relatively prime ideals $\frak{a},\frak{b}\subset\orb_{K}$, $\phi_{K}(\frak{ab}) = \phi_{K}(\frak{a})\phi_{K}(\frak{b})$. It follows immediately that for any prime ideal $\frak{p}$ in $\orb_{K}$, $\phi_{K}(\frak{p}) = N(\frak{p}) - 1$ and more generally, 
\begin{equation}
\phi_{K}(\frak{p}^{k}) = N(\frak{p})^{k - 1}(N(\frak{p}) - 1) = N(\frak{p})^{k}\left (1 - \frac{1}{N(\frak{p})}\right ). 
\end{equation}
As a consequence, for any ideal $\frak{a}\subset\orb_{K}$, 
\begin{equation}\label{eq:Dedekindtotientprod}
\phi_{K}(\frak{a}) = N(\frak{a})\prod_{\frak{p}\mid\frak{a}} \left (1 - \frac{1}{N(\frak{p})}\right ). 
\end{equation}

On the other hand, $\phi_{K}$ can be described by a recursive formula similar to formula (\ref{eq:totientrecur}). 

\begin{prop}
For any ideal $\frak{a}\in I_{K}^{+}$, 
\begin{equation}
N(\frak{a}) = \sum_{\frak{d}\mid\frak{a}} \phi_{K}(\frak{d})
\end{equation}
where the sum runs over all ideals $\frak{d}$ dividing $\frak{a}$. In other words, $N = \phi_{K}*\zeta$ where $\zeta(\frak{a}) = 1$ is the zeta function for $I_{K}^{+}$. 
\end{prop}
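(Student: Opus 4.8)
The plan is to observe that both sides of the claimed identity are multiplicative arithmetic functions in $I(I_K^+)$ and then to verify the identity on prime powers. Concretely: the ideal norm $N$ is completely multiplicative (it is a monoid homomorphism $I_K^+\rightarrow\N$), the function $\phi_K$ is multiplicative by the application of Sun Tzu's Theorem to $\orb_K/\frak{ab}$ recorded above, and the zeta function $\zeta$ is trivially multiplicative. Since $I_K^+$ is the free commutative monoid on the set of prime ideals of $\orb_K$, the Dirichlet convolution of two multiplicative functions is again multiplicative, by exactly the same bookkeeping as in the classical monoid $\N^{\times}$. Hence $N$ and $\phi_K*\zeta$ agree on all of $I_K^+$ as soon as they agree on every ideal $\frak{p}^{k}$ with $\frak{p}$ prime and $k\geq 0$.

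Second, I would compute $\sum_{\frak{d}\mid\frak{p}^{k}}\phi_K(\frak{d})$ by hand. The divisors of $\frak{p}^{k}$ are precisely $\orb_K=\frak{p}^{0},\frak{p},\ldots,\frak{p}^{k}$, so using the values $\phi_K(\orb_K)=1$ and $\phi_K(\frak{p}^{j})=N(\frak{p})^{j}-N(\frak{p})^{j-1}$ for $j\geq 1$ already established in the excerpt, the sum telescopes to $N(\frak{p})^{k}=N(\frak{p}^{k})$. Together with the first step, this proves $N=\phi_K*\zeta$.

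A second, more classical route (mirroring the standard proof that $n=\sum_{d\mid n}\phi(d)$) is to argue bijectively. Partition $\orb_K/\frak{a}$ according to the value of the ideal $(x)+\frak{a}$; this ideal contains $\frak{a}$, hence — since $\orb_K$ is a Dedekind domain, where containment of ideals is the same as divisibility — it is a divisor $\frak{d}\mid\frak{a}$. Thus $N(\frak{a})=\sum_{\frak{d}\mid\frak{a}}\#\{x+\frak{a}:(x)+\frak{a}=\frak{d}\}$, and it remains to show that this fiber has exactly $\phi_K(\frak{a}\frak{d}^{-1})$ elements; re-indexing the sum by $\frak{e}=\frak{a}\frak{d}^{-1}$ then gives the result.

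The main obstacle in either route is a small amount of commutative algebra about invertible ideals: in the first route it is the (routine) fact that convolution preserves multiplicativity over the free commutative monoid $I_K^+$, and in the second it is the fiber count, which I would deduce from the $\orb_K$-module isomorphism $\frak{d}/\frak{a}=\frak{d}/\frak{d}(\frak{a}\frak{d}^{-1})\cong\orb_K/(\frak{a}\frak{d}^{-1})$ — valid because the invertible ideal $\frak{d}$ is locally principal — after checking prime by prime that under this isomorphism the condition $(x)+\frak{a}=\frak{d}$ translates into $x$ being sent to a unit of $\orb_K/(\frak{a}\frak{d}^{-1})$. Neither obstacle is serious; since the first route is shorter and uses only facts already in the excerpt, that is the one I would write up.
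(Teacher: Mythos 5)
Your proof is correct, but it takes a genuinely different route from the paper's. The paper starts from the product formula $\phi_{K}(\frak{a}) = N(\frak{a})\prod_{\frak{p}\mid\frak{a}}\left(1 - N(\frak{p})^{-1}\right)$, expands the product into the sum $\sum_{\frak{d}\mid\frak{a}} N(\frak{d})\mu(\frak{a}\frak{d}^{-1})$, reads this off as the convolution identity $\phi_{K} = N*\mu$, and then applies M\"{o}bius inversion in $I(I_{K}^{\times})$ to get $N = \phi_{K}*\zeta$. Your first route dispenses with M\"{o}bius inversion entirely: you reduce to prime powers using that convolution preserves multiplicativity over the free commutative monoid $I_{K}^{+}$, and there the identity follows from the telescoping sum $1 + \sum_{j=1}^{k}\left(N(\frak{p})^{j} - N(\frak{p})^{j-1}\right) = N(\frak{p})^{k}$. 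Both arguments rest on the same two inputs (multiplicativity of $\phi_{K}$ and its values on $\frak{p}^{j}$), so they are of comparable length; what the paper's route buys is the intermediate identity $\phi_{K} = N*\mu$, which it immediately reuses to compute the Dirichlet series of $\phi_{K}$ in Proposition~\ref{prop:Dedekindphizetaformula}, whereas your route is more self-contained and mirrors the most elementary proof of the classical $n = \sum_{d\mid n}\phi(d)$. Your second, bijective route (partitioning $\orb_{K}/\frak{a}$ by the ideal $(x)+\frak{a}$) is also sound and is the most conceptual of the three, at the cost of the extra commutative algebra you identify. One small point to make explicit in the telescoping step: the $j=0$ term is $\phi_{K}(\orb_{K}) = \#(\orb_{K}/\orb_{K})^{\times} = 1$, the unit group of the zero ring being trivial, which is exactly the normalization needed for the sum to collapse.
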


\begin{proof}
By formula (\ref{eq:Dedekindtotientprod}), 
$$
\phi_{K}(\frak{a}) = N(\frak{a})\prod_{\frak{p}\mid\frak{a}} \left (1 - \frac{1}{N(\frak{p})}\right ) = N(\frak{a})\sum_{\frak{d}\mid\frak{a}} \frac{\mu(\frak{d})}{N(\frak{d})} = \sum_{\frak{d}\mid\frak{a}} \mu(\frak{d})N(\frak{ad}^{-1}). 
$$
This can be rewritten 
\begin{equation}
\phi_{K}(\frak{a}) = \sum_{\frak{d}\mid\frak{a}} N(\frak{d})\mu(\frak{ad}^{-1})
\end{equation}
i.e.~$\phi_{K} = N*\mu$. Then M\"{o}bius inversion gives the desired formula. 
\end{proof}

\begin{rem}
We could have proved formula (\ref{eq:totientrecur}) for the classical totient function this way too, i.e.~by first proving formula (\ref{eq:classicalmobius}) directly using the above argument and then applying M\"{o}bius inversion. 
\end{rem}

The ideal norm, viewed as a monoid map $N : I_{K}^{\times}\rightarrow\N^{\times}$, allows one to build Dirichlet series out of arithmetic functions in $I(I_{K}^{\times})$, as explained in \cite[Rmk.~3.31]{kob}. For $f\in I(I_{K}^{\times})$, its associated Dirichlet series is the Dirichlet series attached to the {\it pushforward} $N_{*}f$, defined by 
$$
(N_{*}f)(n) = \sum_{N(\frak{a}) = n} f(\frak{a}). 
$$
The explicit Dirichlet series can be written 
$$
F(s) = \sum_{n = 1}^{\infty} \frac{(N_{*}f)(n)}{n^{s}} = \sum_{\frak{a}\in I_{K}^{\times}} \frac{f(\frak{a})}{N(\frak{a})^{s}}. 
$$

As in Proposition~\ref{prop:phizetaformula}, the Dirichlet series for $\phi_{K}$ can be expressed in terms of the Dedekind zeta function for $K/\Q$. 

\begin{prop}
\label{prop:Dedekindphizetaformula}
As formal Dirichlet series, 
\begin{equation}
\sum_{\frak{a}\in I_{K}^{+}} \frac{\phi_{K}(\frak{a})}{N(\frak{a})^{s}} = \frac{\zeta_{K}(s - 1)}{\zeta_{K}(s)}. 
\end{equation}
\end{prop}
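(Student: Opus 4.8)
The plan is to mimic the proof of Proposition~\ref{prop:phizetaformula} verbatim, replacing ordinary Dirichlet convolution by convolution in the incidence algebra $I(I_K^\times)$ and passing to Dirichlet series via the norm pushforward. First I would recall that, by the previous proposition, $\phi_K = N*\mu$ in $I(I_K^\times)$, where $N(\frak{a}) = N_{K/\Q}(\frak{a})$ is the ideal norm regarded as an arithmetic function and $\mu$ is the M\"{o}bius function for the monoid $I_K^+$.

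Next I would invoke the fact, recorded in \cite[Rmk.~3.31]{kob}, that because $N : I_K^\times\rightarrow\N^\times$ is a homomorphism of monoids, the pushforward $N_* : I(I_K^\times)\rightarrow I(\N^\times)$ is a homomorphism of $k$-algebras; in particular it sends convolution to convolution, hence multiplies the associated Dirichlet series. (Note $N_*f$ is well defined since $\orb_K$ has only finitely many ideals of a given norm.) Applying this to $\phi_K = N*\mu$ gives
$$
\sum_{\frak{a}\in I_K^+} \frac{\phi_K(\frak{a})}{N(\frak{a})^{s}} = \left (\sum_{\frak{a}\in I_K^+} \frac{N(\frak{a})}{N(\frak{a})^{s}}\right )\left (\sum_{\frak{a}\in I_K^+} \frac{\mu(\frak{a})}{N(\frak{a})^{s}}\right ).
$$
I would then identify the two factors. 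The first is $\sum_{\frak{a}} N(\frak{a})^{-(s-1)} = \zeta_K(s-1)$, by the very definition of the Dedekind zeta function as the Dirichlet series attached to the constant function $\zeta\in I(I_K^\times)$, shifted by $1$. For the second factor, observe that $\zeta_K(s) = \sum_{\frak{a}} N(\frak{a})^{-s}$ is the Dirichlet series of $N_*\zeta$, while $\mu$ is the convolution inverse of $\zeta$ in $I(I_K^\times)$; since $N_*$ is a ring map, $N_*\mu$ is the inverse of $N_*\zeta$, so $\sum_{\frak{a}} \mu(\frak{a})N(\frak{a})^{-s} = \zeta_K(s)^{-1}$ (the Euler product $\zeta_K(s) = \prod_{\frak{p}} (1 - N(\frak{p})^{-s})^{-1}$ makes this completely explicit). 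Combining the two factors yields $\zeta_K(s-1)/\zeta_K(s)$, as claimed.

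The only genuine point requiring care — and the step I would be most explicit about — is the claim that $N_*$ respects convolution, which is exactly where the hypothesis that $N$ is a \emph{monoid} homomorphism is used: grouping ideals by their norm,
$$
(N_*(f*g))(n) = \sum_{N(\frak{a}\frak{b}) = n} f(\frak{a})g(\frak{b}) = \sum_{ij = n}\ \sum_{\substack{N(\frak{a}) = i\\ N(\frak{b}) = j}} f(\frak{a})g(\frak{b}) = ((N_*f)*(N_*g))(n).
$$
Everything else is bookkeeping with Euler products and geometric series, precisely as in the classical case of Proposition~\ref{prop:phizetaformula}.
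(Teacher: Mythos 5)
Your proof is correct and follows the same route as the paper: start from $\phi_K = N*\mu$ (established in the preceding proposition), push forward along the norm, and identify the two resulting Dirichlet series as $\zeta_K(s-1)$ and $\zeta_K(s)^{-1}$. The paper's own proof is exactly this computation, written more tersely; your explicit verification that $N_*$ respects convolution is a detail the paper delegates to \cite[Rmk.~3.31]{kob} but is a welcome addition.
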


\begin{proof}
Since $\phi_{K} = N*\mu$, we get 
$$
\sum_{\frak{a}\in I_{K}^{+}} \frac{\phi_{K}(\frak{a})}{N(\frak{a})^{s}} = \left (\sum_{\frak{a}\in I_{K}^{+}} \frac{N(\frak{a})}{N(\frak{a})^{s}}\right )\left (\sum_{\frak{a}\in I_{K}^{+}} \frac{\mu(\frak{a})}{N(\frak{a})^{s}}\right ) = \zeta_{K}(s - 1)\mu_{K}(s) = \zeta_{K}(s - 1)\zeta_{K}(s)^{-1}. 
$$
\end{proof}

More generally, a totient function can be defined on the monoid of $0$-dimensional ideals in an {\it abstract number ring}, in the sense of \cite{cla}. For example, the coordinate ring of a curve $C/\F_{q}$ admits such a totient function. Viewing the corresponding incidence algebra more abstractly allows us to generalize the definition to arbitrary varieties over a finite field in the next section.


\subsection{Totient function of a variety}
\label{sec:totientvariety}

Let $X$ be a variety over $k = \F_{q}$. The definition of the totient function in this section is a generalization of the one given in \cite{ar} for $X$ a curve. 

For a closed point $x\in |X|$, let $\orb_{X,x}$ be the local ring at $x$, $\frak{m}_{x}$ the maximal ideal corresponding to $x$ and $k(x) = \orb_{X,x}/\frak{m}_{x}$ the residue field at $x$, which is a finite extension of $k$, namely $k(x) = \F_{q^{\deg(x)}}$. 

Let $Z_{0}^{\eff}(X)$ be the monoid of effective $0$-cycles on $X$ under addition. For an effective $0$-cycle $\alpha = \sum_{i = 1}^{r} a_{i}x_{i}$, where $x_{i}$ are distinct closed points of $X$ and $a_{i}\geq 1$, define the semilocal ring associated to $\alpha$ by 
$$
\orb_{X,\alpha} := \bigcap_{i = 1}^{r} \orb_{X,x_{i}}
$$
and the distinguished ideal of $\orb_{X,\alpha}$ by 
$$
I_{\alpha} = \bigcap_{i = 1}^{r} \frak{m}_{x_{i}}^{a_{i}}. 
$$
Then $\orb_{X,\alpha}/I_{\alpha}$ is a finite ring of order $q^{\deg(\alpha)}$. In this way, we can regard the function $\pi : Z_{0}^{\eff}(X)\rightarrow\N,\alpha\mapsto q^{\deg(\alpha)}$ as an analogue of the norm $N_{K/\Q}$ of a number field $K$. We will give a careful description of this in a moment. 

\begin{defn}
The {\bf totient function of a variety} $X/\F_{q}$ is the function $\phi_{X} : Z_{0}^{\eff}(X)\rightarrow\Z$ defined by 
$$
\phi_{X}(\alpha) = \#(\orb_{X,\alpha}/I_{\alpha})^{\times}. 
$$
\end{defn}

As in Section~\ref{sec:dedekindtotient}, we may extract certain algebraic properties of $\phi_{X}$ by viewing it as an arithmetic function in the incidence algebra $I(Z_{0}^{\eff}(X))$. 

We say two $0$-cycles $\alpha,\beta\in Z_{0}^{\eff}(X)$ are {\it relatively prime} if $\supp(\alpha)\cap\supp(\beta) = \varnothing$. Then Sun Tzu's Theorem\footnote{In \cite[Lem.~7]{ar}, the authors give a careful proof of Sun Tzu's Theorem in the dimension $1$ case but the proof carries over verbatim to the finite rings $\orb_{X,\alpha}/I_{\alpha}$.} implies that if $\alpha$ and $\beta$ are relatively prime, $\phi_{X}(\alpha + \beta) = \phi_{X}(\alpha)\phi_{K}(\beta)$. It follows that for any closed point $x\in |X|$, $\phi_{X}(x) = q^{\deg(x)} - 1$. More generally, 
\begin{equation}
\phi_{X}(ax) = q^{\deg(ax)}(1 - q^{-\deg(x)}).     
\end{equation}
From this, we get: 
\begin{equation}\label{eq:HWtotientprod}
\phi_{X}(\alpha) = q^{\deg(\alpha)}\prod_{i = 1}^{r} (1 - q^{-\deg(x_{i})})
\end{equation}
for any effective $0$-cycle $\alpha = \sum_{i = 1}^{r} a_{i}x_{i}$ on $X$. 

As before, there is also a recursive formula for $\phi_{X}$. 

\begin{prop}
For any effective $0$-cycle $\alpha$ on $X$, 
\begin{equation}
q^{\deg(\alpha)} = \sum_{\beta\leq\alpha} \phi_{X}(\beta). 
\end{equation}
In other words, $\pi = \phi_{X}*\zeta$, where $\zeta(\alpha) = 1$ is the zeta function for $Z_{0}^{\eff}(X)$ and $\pi = q^{\deg(-)}$. 
\end{prop}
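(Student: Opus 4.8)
The plan is to follow the template of the preceding proposition for number fields: first show that $\phi_X = \pi * \mu$ in the incidence algebra $I(Z_0^{\eff}(X))$, starting from the product formula (\ref{eq:HWtotientprod}), and then convolve with $\zeta$ and invoke M\"{o}bius inversion. The preliminary observation is that $Z_0^{\eff}(X)$ is the free commutative monoid on the set $|X|$ of closed points; in particular it is locally finite, and its M\"{o}bius function is supported on multiplicity-free cycles, with $\mu(\beta) = (-1)^{\#\supp(\beta)}$ when every coefficient of $\beta$ is $0$ or $1$, and $\mu(\beta) = 0$ otherwise. (This follows from $\mu$ being multiplicative over the free generators, with $\mu(0) = 1$, $\mu(x) = -1$ and $\mu(ax) = 0$ for $a \geq 2$, exactly as for $\mu$ on $\N^{\times}$ restricted to a single prime.) I would also note that $\pi = q^{\deg(-)}$ is a monoid homomorphism $Z_0^{\eff}(X)\to k^{\times}$, since $\deg$ is additive on $0$-cycles.

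The computation then runs as follows. Expanding the finite product in (\ref{eq:HWtotientprod}) over subsets of $\{x_1,\ldots,x_r\}$ and using additivity of $\deg$ gives
$$
\prod_{i=1}^{r}\big(1 - q^{-\deg(x_i)}\big) \;=\; \sum_{\substack{\beta \leq \alpha \\ \beta \text{ mult.-free}}} (-1)^{\#\supp(\beta)}\, q^{-\deg(\beta)} \;=\; \sum_{\beta \leq \alpha} \mu(\beta)\, q^{-\deg(\beta)},
$$
the last equality because $\mu$ vanishes on the cycles $\beta \leq \alpha$ that are not multiplicity-free. Multiplying by $q^{\deg(\alpha)}$ and using $q^{\deg(\alpha)} q^{-\deg(\beta)} = q^{\deg(\alpha - \beta)} = \pi(\alpha - \beta)$ for $\beta \leq \alpha$, we obtain
$$
\phi_X(\alpha) = \sum_{\beta \leq \alpha} \pi(\alpha - \beta)\, \mu(\beta),
$$
i.e.~$\phi_X = \pi * \mu$. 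Convolving both sides with $\zeta$ and using $\mu * \zeta = \delta$ then yields $\phi_X * \zeta = \pi$, which is precisely $q^{\deg(\alpha)} = \sum_{\beta \leq \alpha} \phi_X(\beta)$.

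I do not anticipate a real obstacle: the argument is formally the same as in the number-field case, and the only care needed is in pinning down the M\"{o}bius function of $Z_0^{\eff}(X)$ and checking that the combinatorial notion of a ``multiplicity-free sub-cycle of $\alpha$'' is exactly what appears when the Euler product is expanded---equivalently, that ``relatively prime $=$ disjoint support'' matches coprimality with respect to the free generators, so that the multiplicativity of $\phi_X$ recorded above is consistent with this picture. As a sanity check (and an alternative proof) one can instead argue by multiplicativity: $\phi_X$ and $\zeta$, hence $\phi_X * \zeta$, are multiplicative, so it suffices to verify the identity on cycles $ax$ supported at a single closed point, where the right-hand side telescopes, $\sum_{j=0}^{a} \phi_X(jx) = 1 + \sum_{j=1}^{a} q^{(j-1)\deg(x)}\big(q^{\deg(x)}-1\big) = q^{a\deg(x)} = \pi(ax)$.
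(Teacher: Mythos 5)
Your proof is correct and follows essentially the same route as the paper: expand the product in formula (\ref{eq:HWtotientprod}) to obtain $\phi_{X} = \pi*\mu$ in $I(Z_{0}^{\eff}(X))$, then apply M\"{o}bius inversion. You supply details the paper leaves implicit (the explicit description of $\mu$ on the free commutative monoid $Z_{0}^{\eff}(X)$ justifying the middle equality, plus a multiplicativity sanity check), but the argument is the same.
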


\begin{proof}
Write $\alpha = \sum_{i = 1}^{r} a_{i}x_{i}$. By formula (\ref{eq:HWtotientprod}), 
$$
\phi_{X}(\alpha) = q^{\deg(\alpha)}\prod_{i = 1}^{r} (1 - q^{-\deg(x_{i})}) = q^{\deg(\alpha)}\sum_{\beta\leq\alpha} \mu(\beta)q^{-\deg(\beta)} = \sum_{\beta\leq\alpha} \mu(\beta)q^{\deg(\alpha - \beta)}
$$
where $\mu$ is the M\"{o}bius function for $Z_{0}^{\eff}(X)$. This can be rewritten 
\begin{equation}
\phi_{X}(\alpha) = \sum_{\beta\leq\alpha} \mu(\beta)q^{\deg(\alpha - \beta)} = \sum_{\beta\leq\alpha} q^{\deg(\beta)}\mu(\alpha - \beta). 
\end{equation}
Or in other words, $\phi_{X} = \pi*\mu$. Then M\"{o}bius inversion for $Z_{0}^{\eff}(X)$ implies that $\pi = \phi_{X}*\zeta$ as claimed. 
\end{proof}

The analogue of the field norm $N_{K/\Q}$ in this context is the terminal map $t : X\rightarrow\Spec k$, which induces a map $\pi = t_{*} : Z_{0}^{\eff}(X)\rightarrow Z_{0}^{\eff}(\Spec k)$. In turn, this induces a pushforward map $\pi_{*} : I(Z_{0}^{\eff}(X))\rightarrow I(Z_{0}^{\eff}(\Spec k))$. Identifying $Z_{0}^{\eff}(\Spec k) \cong \N_{0}$ as additive monoids, $\pi_{*}$ associates to any $f\in I(Z_{0}^{\eff}(X))$ a power series in $I(Z_{0}^{\eff}(\Spec\F_{q})) \cong k[[t]]$ via the assignment 
$$
\pi_{*}f \longleftrightarrow \sum_{n = 0}^{\infty} (\pi_{*}f)(n)t^{n} = \sum_{\alpha\in Z_{0}^{\eff}(X)} f(\alpha)t^{\deg(\alpha)}. 
$$
From this, we recover a formula relating $\phi_{X}$ to the the Hasse--Weil zeta function of $X$. 

\begin{prop}
\label{prop:HWphizetaformula}
For any variety $X$ over a finite field, 
$$
\sum_{\alpha\in Z_{0}^{\eff}(X)} \phi_{X}(\alpha)t^{\deg(\alpha)} = \frac{Z(X\times\A^{1},t)}{Z(X,t)}. 
$$
\end{prop}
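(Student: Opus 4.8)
The plan is to mimic the proofs of Propositions~\ref{prop:phizetaformula} and~\ref{prop:Dedekindphizetaformula}, replacing Dirichlet series with the power series produced by the pushforward $\pi_* = t_*$ to $Z_0^{\eff}(\Spec\F_q)\cong\N_0$. First I would recall from the preceding Proposition that $\phi_X = \pi * \mu$ in the incidence algebra $I(Z_0^{\eff}(X))$, where $\pi(\alpha) = q^{\deg\alpha}$ and $\mu$ is the M\"obius function for $Z_0^{\eff}(X)$. Since $\pi_*$ is an algebra homomorphism from $I(Z_0^{\eff}(X))$ to $I(Z_0^{\eff}(\Spec\F_q))\cong\F_q[[t]]$ (this is part of the setup already recalled in the excerpt, following \cite[Rmk.~3.31]{kob}), applying $\pi_*$ to the convolution identity gives
$$
\sum_{\alpha\in Z_0^{\eff}(X)} \phi_X(\alpha)t^{\deg\alpha} = \left(\sum_{\alpha} q^{\deg\alpha}t^{\deg\alpha}\right)\left(\sum_{\alpha}\mu(\alpha)t^{\deg\alpha}\right).
$$

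Next I would identify the two factors on the right. The second factor is the generating series of the M\"obius function for $Z_0^{\eff}(X)$; since $\mu$ is the convolution inverse of $\zeta$, and $\pi_*\zeta = \sum_\alpha t^{\deg\alpha} = Z(X,t)$ is the Hasse--Weil zeta function of $X$ (in its $Z(X,t) = \prod_{x\in|X|}(1-t^{\deg x})^{-1}$ form), the multiplicativity of $\pi_*$ forces $\pi_*\mu = Z(X,t)^{-1}$. For the first factor, I would observe that $\sum_\alpha q^{\deg\alpha}t^{\deg\alpha} = \sum_\alpha (qt)^{\deg\alpha} = Z(X,qt)$, so it remains to recognize $Z(X,qt)$ as $Z(X\times\A^1,t)$. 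This last identification is the crux: it follows from the product (or scissor) formula for zeta functions of varieties over finite fields, namely $Z(X\times\A^1,t) = Z(X,qt)$, which in turn comes from $\#(X\times\A^1)(\F_{q^n}) = q^n\#X(\F_{q^n})$ at the level of point counts, or equivalently from the fact that every closed point of $X\times\A^1$ lying over a closed point $x$ of $X$ contributes a copy of $\A^1$ over $k(x) = \F_{q^{\deg x}}$, whose zeta function is $(1 - q^{\deg x}t^{\deg x})^{-1}$.

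Assembling these, the right-hand side becomes $Z(X,qt)\cdot Z(X,t)^{-1} = Z(X\times\A^1,t)/Z(X,t)$, which is the claimed formula. The main obstacle — really the only nontrivial ingredient — is justifying $Z(X\times\A^1,t) = Z(X,qt)$ cleanly; depending on how much has been developed in \cite{kob}, I would either cite it directly or give the one-line point-counting argument $Z(X\times\A^1,t) = \exp\big(\sum_{n\geq 1}\#(X\times\A^1)(\F_{q^n})\tfrac{t^n}{n}\big) = \exp\big(\sum_{n\geq 1} q^n\#X(\F_{q^n})\tfrac{t^n}{n}\big) = Z(X,qt)$. Everything else is a formal consequence of the fact that $\pi_*$ is a ring homomorphism and of M\"obius inversion, exactly as in the number-field case. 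One minor bookkeeping point to handle is the interplay between the shifted-variable description $Z(X,qt)$ and the shifted-weight description $\sum_\alpha q^{\deg\alpha} t^{\deg\alpha}$, which is immediate once one writes out $\deg$ as the additive invariant it is, but is worth stating so the analogy with the $\zeta_K(s-1)$ shift in Proposition~\ref{prop:Dedekindphizetaformula} is transparent.
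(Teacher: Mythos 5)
Your proposal is correct and follows essentially the same route as the paper: translate $\phi_{X} = \pi*\mu$ into power series via the pushforward to $Z_{0}^{\eff}(\Spec\F_{q})\cong\N_{0}$, identify the two factors as $Z(X,qt)$ and $Z(X,t)^{-1}$, and invoke $Z(X,qt) = Z(X\times\A^{1},t)$. The only difference is that the paper cites this last identity as well-known, whereas you supply the short point-counting justification.
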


\begin{proof}
Translating the identity $\phi_{X} = \pi*\mu$ to power series produces 
\begin{align*}
  \sum_{\alpha\in Z_{0}^{\eff}(X)} \phi_{X}(\alpha)t^{\deg(\alpha)} &= \left (\sum_{\alpha\in Z_{0}^{\eff}(X)} q^{\deg(\alpha)}t^{\deg(\alpha)}\right )\left (\sum_{\alpha\in Z_{0}^{\eff}(X)} \mu(\alpha)t^{\deg(\alpha)}\right )\\
    &= \left (\sum_{\alpha\in Z_{0}^{\eff}(X)} (qt)^{\deg(\alpha)}\right )Z(X,t)^{-1} = Z(X,qt)Z(X,t)^{-1}.
\end{align*}
It is well-known that $Z(X,qt) = Z(X\times\A^{1},t)$, so the claimed formula holds. 
\end{proof}


\subsection{Totients and arithmetic schemes}

Let $X$ be an arithmetic scheme, i.e.~a scheme of finite type over $\Spec\Z$. For each prime $p$, the $\F_{p}$-scheme $X_{p}$ has a totient function $\phi_{X_{p}} : Z_{0}^{\eff}(X_{p})\rightarrow\Z$ defined by $\phi_{X_{p}}(\alpha) = \#(\orb_{X_{p},\alpha}/I_{\alpha})^{\times}$ for any $\alpha\in Z_{0}^{\eff}(X_{p})$. Denote its generating function by 
$$
\Phi(X_{p},t) = \sum_{\alpha\in Z_{0}^{\eff}(X_{p})} \phi_{X_{p}}(\alpha)t^{\deg(\alpha)}. 
$$
These can be packaged together into a Dirichlet series, 
$$
\Phi_{X}(s) = \prod_{p\text{ prime}} \Phi(X_{p},p^{-s}). 
$$
From formula (\ref{eq:HWtotientprod}), it follows that 
\begin{equation}
\Phi_{X}(s) = \prod_{x\in |X|} \left (1 - \frac{N(x) - 1}{N(x)^{s}}\right )^{-1}. 
\end{equation}
Additionally, Proposition~\ref{prop:HWphizetaformula} implies that 
\begin{equation}\label{eq:arithphizeta}
\Phi_{X}(s) = \frac{\zeta_{X}(s - 1)}{\zeta_{X}(s)}. 
\end{equation}

\begin{rem}
\label{rem:totientmotivic}
Thinking motivically, we can see formula (\ref{eq:arithphizeta}) and its various special cases (Propositions~\ref{prop:phizetaformula}, \ref{prop:Dedekindphizetaformula} and~\ref{prop:HWphizetaformula}) as shadows of the identity $[\A^{1}] = [\G_{m}] + [\text{point}]$ in the Grothendieck ring of $\Z$-schemes (resp.~$\orb_{K}$-schemes, $\F_{q}$-varieties). 
\end{rem}


\subsection{Application: Euler's theorem}

One classical application of Euler's totient function is the formula
\begin{equation}\label{eq:euler}
a^{\phi(n)} \equiv 1\pmod{n},
\end{equation}
which holds for every $n\geq 2$ and $a$ relatively prime to $n$. This admits the following generalizations, which are routine to verify. 

\begin{prop}
\label{prop:eulernumberfield}
Let $K/\Q$ be a number field with ring of integers $\orb_{K}$. For any $\alpha\in\orb_{K}$ and ideal $\frak{b}$ such that $\alpha\orb_{K}$ and $\frak{b}$ are relatively prime, 
$$
\alpha^{\phi_{K}(\frak{b})} \equiv 1\pmod{\frak{b}}. 
$$
\end{prop}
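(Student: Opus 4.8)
The plan is to mimic the classical proof of Euler's theorem \eqref{eq:euler}, transporting it to the setting of the finite ring $\orb_K/\frak{b}$. First I would observe that since $\alpha\orb_K$ and $\frak{b}$ are relatively prime, the image $\bar\alpha$ of $\alpha$ in the quotient ring $\orb_K/\frak{b}$ is a unit: indeed, $\frak{b} + \alpha\orb_K = \orb_K$ means there exist $\beta \in \frak{b}$ and $\gamma \in \orb_K$ with $\beta + \alpha\gamma = 1$, so $\bar\alpha\,\bar\gamma = 1$ in $\orb_K/\frak{b}$. Hence $\bar\alpha \in (\orb_K/\frak{b})^\times$.

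Next, recall that $\phi_K(\frak{b}) = \#(\orb_K/\frak{b})^\times$ by definition, so $(\orb_K/\frak{b})^\times$ is a finite group of order $\phi_K(\frak{b})$. By Lagrange's theorem applied to the cyclic subgroup generated by $\bar\alpha$, the order of $\bar\alpha$ divides $\phi_K(\frak{b})$, and therefore $\bar\alpha^{\phi_K(\frak{b})} = 1$ in $\orb_K/\frak{b}$. Unwinding the quotient, this says precisely $\alpha^{\phi_K(\frak{b})} \equiv 1 \pmod{\frak{b}}$, as desired.

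There is really no main obstacle here — the statement is, as the paper says, "routine to verify" — but the one point that deserves care is the passage from the ideal-theoretic coprimality condition $\alpha\orb_K + \frak{b} = \orb_K$ to the statement that $\bar\alpha$ is a unit modulo $\frak{b}$; everything else is just the finiteness of $\orb_K/\frak{b}$ (which holds because $\frak{b}$ is a nonzero ideal in a ring of integers, so $N(\frak{b}) = \#(\orb_K/\frak{b}) < \infty$) together with Lagrange's theorem. One could alternatively phrase the whole argument group-theoretically in one line: $\bar\alpha$ lies in the finite group $(\orb_K/\frak{b})^\times$ of order $\phi_K(\frak{b})$, hence is killed by that exponent. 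I would present the coprimality-to-unit step explicitly and leave the rest as the obvious group-theory fact.
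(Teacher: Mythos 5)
Your proof is correct and is exactly the ``routine verification'' the paper has in mind: the paper states Proposition~\ref{prop:eulernumberfield} without proof, and the standard argument --- coprimality of $\alpha\orb_{K}$ and $\frak{b}$ makes $\bar\alpha$ a unit in the finite ring $\orb_{K}/\frak{b}$, and Lagrange's theorem in the group $(\orb_{K}/\frak{b})^{\times}$ of order $\phi_{K}(\frak{b})$ finishes it --- is precisely what you give. No issues.
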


\begin{prop}
\label{prop:eulervariety}
Let $X/\F_{q}$ be a variety, $U\subseteq X$ an open subset and $f\in\orb_{X}(U)$. Then for any effective $0$-cycle $\beta\in Z_{0}^{\eff}(X)$ with support outside $V(f) = \{x\in U\mid f(x) = 0\}$, 
$$
f^{\phi_{X}(\beta)} \equiv 1\pmod{I_{\beta}}
$$
in the semilocal ring $\orb_{X,\beta}$. 
\end{prop}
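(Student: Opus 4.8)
The plan is to reduce Proposition~\ref{prop:eulervariety} to the standard structure of the finite semilocal ring $\orb_{X,\beta}/I_{\beta}$, exactly mirroring the classical proof of Euler's theorem~(\ref{eq:euler}) via Lagrange's theorem in the unit group. First I would observe that the hypothesis ``$\supp(\beta)$ lies outside $V(f)$'' means precisely that, at each closed point $x_i$ appearing in $\beta = \sum_{i=1}^r a_i x_i$, the germ of $f$ is a unit in $\orb_{X,x_i}$ (its value $f(x_i)\in k(x_i)$ is nonzero, hence a unit, and $\orb_{X,x_i}$ is local, so $f\in\orb_{X,x_i}^\times$). Since $\orb_{X,\beta} = \bigcap_{i=1}^r \orb_{X,x_i}$, this shows $f\in\orb_{X,\beta}^\times$, and in particular the image $\bar f$ of $f$ in the finite ring $R_\beta := \orb_{X,\beta}/I_\beta$ is a unit.

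Next I would invoke the definition of $\phi_X(\beta) = \#(\orb_{X,\beta}/I_\beta)^\times = \# R_\beta^\times$. The group $R_\beta^\times$ is finite of order $\phi_X(\beta)$, so by Lagrange's theorem every element — in particular $\bar f$ — satisfies $\bar f^{\,\phi_X(\beta)} = 1$ in $R_\beta^\times$. Unwinding the quotient, this is exactly the congruence $f^{\phi_X(\beta)} \equiv 1 \pmod{I_\beta}$ in $\orb_{X,\beta}$, which is the claim. Strictly speaking one should also note that $\beta$ nonzero (or $X$ positive-dimensional enough) guarantees $R_\beta$ is not the zero ring so $1\neq 0$ there; this is harmless and can be mentioned in a line.

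For a slightly more self-contained argument — and to make transparent that this genuinely generalizes Sun Tzu's theorem and the number-field case (Proposition~\ref{prop:eulernumberfield}) — I would optionally first apply Sun Tzu's theorem (as already used in Section~\ref{sec:totientvariety}) to write $R_\beta \cong \prod_{i=1}^r \orb_{X,x_i}/\frak m_{x_i}^{a_i}$, reducing to the case $\beta = a x$ of a single point; the multiplicativity $\phi_X(\alpha+\beta)=\phi_X(\alpha)\phi_X(\beta)$ for relatively prime cycles then recovers the global exponent. But this refinement is not needed for the proof itself.

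The only genuine content is the first step — verifying that the hypothesis on $\supp(\beta)$ forces $f$ to be a unit in each relevant local ring, hence in the semilocal ring $\orb_{X,\beta}$ and its finite quotient. Everything after that is Lagrange's theorem. So there is no real obstacle; the ``hard part'' is purely bookkeeping about local rings and the definition of $V(f)$, and the proof is a two-line formal verification once that is in place. (This matches the paper's own assessment that these generalizations are ``routine to verify.'')
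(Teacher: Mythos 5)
Your proof is correct and is exactly the routine verification the paper alludes to (the paper offers no written proof, stating only that these generalizations are ``routine to verify''): the hypothesis makes $f$ a unit in each $\orb_{X,x_{i}}$, hence in the finite quotient $\orb_{X,\beta}/I_{\beta}$, and Lagrange's theorem in its unit group of order $\phi_{X}(\beta)$ finishes the argument. The only caveat --- really an imprecision in the proposition's statement rather than in your proof --- is that one must also assume $\supp(\beta)\subseteq U$ so that the germ of $f$ actually lies in each $\orb_{X,x_{i}}$; you use this implicitly when evaluating $f(x_{i})\in k(x_{i})$.
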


As a special case of Proposition~\ref{prop:eulervariety}, we recover the following version of Euler's theorem for polynomials over a finite field, originally due to Wardlaw \cite{war}. 

\begin{cor}
Let $k$ be a finite field, $g\in k[t]$ and $D = \div_{0}(g)$ the effective divisor of zeroes of $g$. Then 
\begin{enumerate}[\quad (1)]
    \item $\phi_{\P^{1}}(D) = \#\{f\in k[t]\mid \deg(f) < \deg(g) \text{ and } \gcd(f,g) = 1\}$. 
    \item For any such $f\in k[t]$, 
    $$
    f^{\phi_{\P^{1}}(D)} \equiv 1\pmod{g}
    $$
    holds in $k[t]$. 
\end{enumerate}
\end{cor}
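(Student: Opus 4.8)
The plan is to derive the Corollary as a concrete instance of Proposition~\ref{prop:eulervariety}, after translating the geometric objects into polynomial language. Specialize to $X = \P^{1}$ over $k = \F_{q}$ and $U = \A^{1} = \Spec k[t]$, and take the effective $0$-cycle to be $\beta = D = \div_{0}(g)$. Writing $g = c\,p_{1}^{a_{1}}\cdots p_{r}^{a_{r}}$ with $c \in k^{\times}$ and the $p_{i} \in k[t]$ distinct monic irreducibles, the zeroes of $g$ all lie in $\A^{1}$, so $D = \sum_{i=1}^{r} a_{i}x_{i}$ where $x_{i} \in |\A^{1}|$ has maximal ideal $(p_{i})$ and $\deg(x_{i}) = \deg(p_{i})$; in particular $\infty \notin \supp(D)$, so $\orb_{\P^{1},D} = \orb_{\A^{1},D}$, the localization of $k[t]$ at the multiplicative set $S$ of polynomials coprime to $g$.

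For part (1), I would identify $\orb_{\A^{1},D}/I_{D} \cong k[t]/(g)$. The distinguished ideal $I_{D} = \bigcap_{i} \frak{m}_{x_{i}}^{a_{i}}$ is generated in $\orb_{\A^{1},D}$ by $g$ (equivalently by $\prod_{i} p_{i}^{a_{i}}$, since $c$ is a unit), because $\frak{m}_{x_{i}}\orb_{\A^{1},D} = (p_{i})\orb_{\A^{1},D}$ and the $\frak{m}_{x_{i}}$ are pairwise comaximal; moreover localizing $k[t]/(g)$ at $S$ is vacuous, as every element of $S$ is already a unit modulo $g$. Hence $\phi_{\P^{1}}(D) = \#(\orb_{\P^{1},D}/I_{D})^{\times} = \#(k[t]/(g))^{\times}$. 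Finally, every residue class in $k[t]/(g)$ has a unique representative of degree $< \deg(g)$, and such a class is a unit exactly when that representative $f$ satisfies $\gcd(f,g) = 1$; counting units then yields the stated formula.

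For part (2), let $f$ be as in (1). The condition $\gcd(f,g) = 1$ means $V(f)$ and $\supp(D)$ are disjoint, so Proposition~\ref{prop:eulervariety} applies with this $f$ and $\beta = D$ and gives $f^{\phi_{\P^{1}}(D)} \equiv 1 \pmod{I_{D}}$ in $\orb_{\P^{1},D}$. Since $f \in k[t]$ and $I_{D} \cap k[t] = (g)$, this congruence descends to $f^{\phi_{\P^{1}}(D)} \equiv 1 \pmod{g}$ in $k[t]$; alternatively, one can bypass Proposition~\ref{prop:eulervariety} entirely and invoke Lagrange's theorem in the finite group $(k[t]/(g))^{\times}$, whose order is $\phi_{\P^{1}}(D)$ by part (1). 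The step I expect to require the most care is nothing deep but purely bookkeeping: checking that the semilocal ring $\orb_{\P^{1},D}$, its distinguished ideal $I_{D}$, and the finite quotient $\orb_{\P^{1},D}/I_{D}$ genuinely coincide with $k[t]$, $(g)$, and $k[t]/(g)$ — in particular that $I_{D} \cap k[t] = (g)$ (so the congruence really descends) and that the point at infinity contributes nothing — after which both parts follow formally.
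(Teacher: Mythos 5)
Your argument is correct and follows the same route as the paper's (one-sentence) proof: specialize Proposition~\ref{prop:eulervariety} to $X=\P^{1}$, $U=\A^{1}$, identify $\orb_{\P^{1},D}/I_{D}\cong k[t]/(g)$, and count units. You simply fill in the bookkeeping (that $\infty\notin\supp(D)$, that $I_{D}$ is generated by $g$ in the semilocal ring, and that $I_{D}\cap k[t]=(g)$) which the paper leaves implicit.
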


\begin{proof}
Both statements follow from setting $X = \P^{1}$, $U = \P^{1}\smallsetminus\{\infty\} \cong \A^{1}$, identifying the rings $\orb_{\P^{1},D} \cong k[t]/(g)$ and taking cardinalities of multiplicative groups of units. 
\end{proof}


\section{Divisor Sum Functions}
\label{sec:sigmak}

The divisor sum functions 
$$
\sigma_{m}(n) = \sum_{d\mid n} d^{m}
$$
are another source of interesting Dirichlet series, namely 
\begin{equation}\label{eq:divisorsumDirichletseries}
\sum_{n = 1}^{\infty} \frac{\sigma_{m}(n)}{n^{s}} = \zeta_{\Q}(s)\zeta_{\Q}(s - m). 
\end{equation}
In particular, the Dirichlet series for $\sigma_{1}$ is $\zeta_{\Q}(s)\zeta_{\Q}(s - 1) = \zeta_{\P^{1}}(s)$, the zeta function for the arithmetic scheme $\P^{1}/\Spec\Z$. This agrees with the formula $\sigma_{1}(n) = \#\P^{1}(\Z/n\Z)$, which we will generalize in a moment. 

Note that $\sigma_{1}$ is weakly multiplicative and for prime powers, it takes on the values $\sigma_{1}(p^{k}) = 1 + p + \ldots + p^{k} = \frac{p^{k + 1} - 1}{p - 1}$. Together these imply that for any $n\geq 1$, 
\begin{equation}
\sigma_{1}(n) = \prod_{p^{k}\mid\mid n} \frac{p^{k + 1} - 1}{p - 1},
\end{equation}
where $p^{k}\mid\mid n$ denotes that $p^{k}\mid n$ but $p^{k + 1}\nmid n$. 

By definition, $\sigma_{1} = \id*\zeta$ as arithmetic functions. Then M\"{o}bius inversion implies $\id = \sigma_{1}*\mu$, or equivalently, 
\begin{equation}
n = \sum_{d\mid n} \sigma_{1}(d)\mu\left (\frac{n}{d}\right ). 
\end{equation}


\subsection{Divisor sum of a number field}

\begin{defn}
The {\bf first divisor sum of a number field} $K/\Q$ is the arithmetic function $\sigma_{1,K} : I_{K}^{\times}\rightarrow\Z$ defined by 
$$
\sigma_{1,K}(\frak{a}) = \#\P^{1}(\orb_{K}/\frak{a}). 
$$
\end{defn}

For any relatively prime ideals $\frak{a},\frak{b}\subset\orb_{K}$, we have\footnote{More generally, if $T$ is a scheme and $f_{T}$ is the arithmetic function $f_{T}(n) = \#T(\Z/n\Z)$, then $f_{T}$ is weakly multiplicative by Sun Tzu's Theorem and the fact that the functor of points of $T$ sends coproducts to products.} $\sigma_{1,K}(\frak{ab}) = \sigma_{1,K}(\frak{a})\sigma_{1,K}(\frak{b})$. From the definition, it follows that for any prime ideal $\frak{p}\in I_{K}^{+}$, $\sigma_{1,K}(\frak{p}) = N(\frak{p}) + 1$, and for any $i\geq 1$, 
\begin{equation}
\sigma_{1,K}(\frak{p}^{k}) = 1 + N(\frak{p}) + \ldots + N(\frak{p})^{k} = \frac{N(\frak{p})^{k + 1} - 1}{N(\frak{p}) - 1}. 
\end{equation}
That is, for any ideal $\frak{a}\in I_{K}^{+}$, 
\begin{equation}
\sigma_{1,K}(\frak{a}) = \prod_{\frak{p}^{k}\mid\mid\frak{a}} \frac{N(\frak{p})^{k + 1} - 1}{N(\frak{p}) - 1}. 
\end{equation}
In terms of arithmetic functions in $I(I_{K}^{\times})$, $\sigma_{1,K} = N*\zeta$, and as a result, we obtain the following. 
\begin{prop}
As formal Dirichlet series, 
$$
\sum_{\frak{a}\in I_{K}^{+}} \frac{\sigma_{1,K}(\frak{a})}{N(\frak{a})^{s}} = \zeta_{K}(s)\zeta_{K}(s - 1). 
$$
\end{prop}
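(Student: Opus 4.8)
The plan is to mirror exactly the computation already carried out in the proof of Proposition~\ref{prop:Dedekindphizetaformula}, but with the factorization $\sigma_{1,K} = N*\zeta$ in place of $\phi_{K} = N*\mu$. First I would recall that the Dirichlet series attached to an arithmetic function $f \in I(I_K^{\times})$ is obtained via the pushforward $N_* f$ along the norm monoid map $N : I_K^{\times}\to\N^{\times}$, so that forming Dirichlet series is a ring homomorphism from $I(I_K^{\times})$ (with convolution) to the ring of formal Dirichlet series (with ordinary multiplication). In particular the convolution identity $\sigma_{1,K} = N*\zeta$ translates into the product of the two corresponding Dirichlet series.

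Next I would identify the two factors. The series attached to $\zeta\in I(I_K^{\times})$, namely $\sum_{\frak{a}} N(\frak{a})^{-s}$, is by definition the Dedekind zeta function $\zeta_K(s)$. The series attached to $N\in I(I_K^{\times})$ is $\sum_{\frak{a}} N(\frak{a})/N(\frak{a})^s = \sum_{\frak{a}} N(\frak{a})^{-(s-1)} = \zeta_K(s-1)$, exactly as in the proof of Proposition~\ref{prop:Dedekindphizetaformula}. Multiplying these together gives
$$
\sum_{\frak{a}\in I_K^+} \frac{\sigma_{1,K}(\frak{a})}{N(\frak{a})^s} = \zeta_K(s-1)\,\zeta_K(s),
$$
which is the claim (the two factors commute, so the order in which they are written is immaterial).

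There is no real obstacle here; the only point requiring a word of care is the justification that the convolution/pushforward formalism genuinely sends $*$ to the ordinary product of Dirichlet series, but this is precisely the content recalled from \cite[Rmk.~3.31]{kob} earlier in the excerpt and used verbatim in Propositions~\ref{prop:phizetaformula} and~\ref{prop:Dedekindphizetaformula}. So the proof is a two-line application of $\sigma_{1,K}=N*\zeta$ together with the identification of the two resulting factors as $\zeta_K(s-1)$ and $\zeta_K(s)$. If one wanted a self-contained alternative not invoking the abstract formalism, one could instead verify the Euler product: both sides factor over prime ideals $\frak{p}$, and on the local factor one checks $\sum_{k\geq 0}\sigma_{1,K}(\frak{p}^k)N(\frak{p})^{-ks} = \bigl(1-N(\frak{p})^{1-s}\bigr)^{-1}\bigl(1-N(\frak{p})^{-s}\bigr)^{-1}$ using $\sigma_{1,K}(\frak{p}^k) = \frac{N(\frak{p})^{k+1}-1}{N(\frak{p})-1}$; but the convolution argument is cleaner and consistent with the rest of the section.
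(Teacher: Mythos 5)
Your proof is correct and is exactly the argument the paper intends: the paper gives no explicit proof, simply asserting the result follows from $\sigma_{1,K} = N*\zeta$ by the same pushforward/convolution computation as in Proposition~\ref{prop:Dedekindphizetaformula}, which is precisely what you carry out. The Euler-product alternative you sketch is a fine sanity check but not needed.
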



\subsection{Divisor sum of a variety}

Next, let $X$ be a variety over $k = \F_{q}$, $x\in |X|$ a closed point and let $\orb_{X,x}$, $\frak{m}_{x}$, $k(x)$, $Z_{0}^{\eff}(X)$, $\orb_{X,\alpha}$ and $I_{\alpha}$ be as before. 

\begin{defn}
The {\bf first divisor sum of a variety} $X/\F_{q}$ is the function $\sigma_{1,X} : Z_{0}^{\eff}(X)\rightarrow\Z$ defined by 
$$
\sigma_{1,X}(\alpha) = \#\P^{1}(\orb_{X,\alpha}/I_{\alpha}). 
$$
\end{defn}

One can check that for any relatively prime $0$-cycles $\alpha,\beta\in Z_{0}^{\eff}(X)$, $\sigma_{1,X}(\alpha + \beta) = \sigma_{1,X}(\alpha)\sigma_{1,X}(\beta)$, and for a closed point $x$, $\sigma_{1,X}(x) = q^{\deg(x)} + 1$. Together, these imply that for any effective $0$-cycle $\alpha = \sum_{i = 1}^{r} a_{i}x_{i}$ on $X$, 
\begin{equation}
\sigma_{1,X}(\alpha) = \prod_{i = 1}^{r} \frac{q^{\deg(x_{i}) + 1} - 1}{q - 1}. 
\end{equation}

As in Section~\ref{sec:totientvariety}, let $\pi : Z_{0}^{\eff}(X)\rightarrow Z_{0}^{\eff}(\Spec\F_{q})$ be the monoid map induced by $t : X\rightarrow\Spec\F_{q}$. Viewing this as an arithmetic function on $Z_{0}^{\eff}(X)$, we see from the definition that $\sigma_{1,X} = \pi*\zeta$, which implies the following. 
\begin{prop}
\label{prop:sigma1variety}
For any variety $X/\F_{q}$, 
$$
\sum_{\alpha\in Z_{0}^{\eff}(X)} \sigma_{1,K}(\alpha)t^{\deg(\alpha)} = Z(X,t)Z(X\times\A^{1},t). 
$$
\end{prop}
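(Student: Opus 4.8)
The plan is to mimic the proofs of Propositions~\ref{prop:HWphizetaformula} and~\ref{prop:sigma1variety}'s number-field analogue, translating the convolution identity $\sigma_{1,X} = \pi*\zeta$ into a statement about generating series under the pushforward $\pi_* : I(Z_0^{\eff}(X))\rightarrow I(Z_0^{\eff}(\Spec\F_q))\cong k[[t]]$. First I would record that for an arithmetic function $f\in I(Z_0^{\eff}(X))$, the series $\sum_{\alpha} f(\alpha)t^{\deg(\alpha)}$ is multiplicative under Dirichlet convolution in the sense that $(f*g)$ corresponds to the product of the two series; this is exactly the statement that $\deg$ is a monoid homomorphism $Z_0^{\eff}(X)\rightarrow\N_0$, so summing $(f*g)(\alpha) = \sum_{\beta\leq\alpha} f(\beta)g(\alpha-\beta)$ against $t^{\deg(\alpha)}$ factors. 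This is the same mechanism already invoked in the proof of Proposition~\ref{prop:HWphizetaformula}.

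Next I would apply this to $\sigma_{1,X} = \pi*\zeta$. The $\zeta$ factor contributes $\sum_{\alpha} t^{\deg(\alpha)} = Z(X,t)$ by the very definition of the Hasse--Weil zeta function as a sum over effective $0$-cycles. The $\pi$ factor, where $\pi(\alpha) = q^{\deg(\alpha)}$, contributes $\sum_{\alpha} q^{\deg(\alpha)}t^{\deg(\alpha)} = \sum_{\alpha}(qt)^{\deg(\alpha)} = Z(X,qt)$, and then the identity $Z(X,qt) = Z(X\times\A^1,t)$ — already cited as well-known in the proof of Proposition~\ref{prop:HWphizetaformula} — rewrites this as $Z(X\times\A^1,t)$. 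Multiplying the two factors gives
$$
\sum_{\alpha\in Z_0^{\eff}(X)} \sigma_{1,X}(\alpha)t^{\deg(\alpha)} = Z(X\times\A^1,t)\,Z(X,t),
$$
which is the claimed formula (the roles of the two factors are symmetric, so the order in the statement is immaterial). One should also double-check that $\sigma_{1,X} = \pi*\zeta$ genuinely holds: this is the assertion $\#\P^1(\orb_{X,\alpha}/I_\alpha) = \sum_{\beta\leq\alpha} q^{\deg(\beta)}$, which follows from the multiplicativity of $\sigma_{1,X}$ on relatively prime cycles together with the local computation $\sigma_{1,X}(ax) = 1 + q^{\deg(x)} + \cdots + q^{a\deg(x)}$ at a single closed point — itself a consequence of the stratification of $\P^1$ over the local Artinian ring $\orb_{X,x}/\frak{m}_x^a$ by the reduction map, exactly as in the number-field case.

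I do not expect a serious obstacle here; the proof is a direct transcription of the arguments already given for $\phi_X$ and for the number-field divisor sum. The one point requiring a little care is the local identity $\sigma_{1,X}(ax) = \frac{q^{(a+1)\deg(x)}-1}{q^{\deg(x)}-1}$ — equivalently that $\#\P^1(R) = \#R + \#(R/\frak{m})^{\cdots}$ summed appropriately for a finite local ring $R$ with residue field of size $q^{\deg(x)}$ — which underlies the claim $\sigma_{1,X} = \pi*\zeta$; this is standard (points of $\P^1$ over a local ring are cut out by unimodular pairs up to units) and is implicitly used already in the text, so I would cite it rather than reprove it.
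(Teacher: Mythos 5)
Your overall route is the same as the paper's: the paper gives no separate proof of this proposition, deriving it immediately from the convolution identity $\sigma_{1,X} = \pi*\zeta$ together with the same pushforward/power-series mechanism used for Proposition~\ref{prop:HWphizetaformula} and the identity $Z(X,qt) = Z(X\times\A^{1},t)$. That part of your argument is exactly what is intended and is correct: $\deg$ is a monoid homomorphism, so convolution becomes multiplication of power series, the $\zeta$ factor contributes $Z(X,t)$, and the $\pi$ factor contributes $Z(X,qt) = Z(X\times\A^{1},t)$.

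The one place where you tried to add rigor is, however, the one place where something genuinely goes wrong. The local identity you invoke, $\#\P^{1}(\orb_{X,x}/\frak{m}_{x}^{a}) = 1 + q^{\deg(x)} + \cdots + q^{a\deg(x)}$, is false for $a\geq 2$: for a finite local ring $R$ with residue field of cardinality $Q$, the count of unimodular pairs up to units gives $\#\P^{1}(R) = \#R\,(1 + Q^{-1})$, so for instance $\#\P^{1}(\Z/4\Z) = 6$ while $\sigma_{1}(4) = 7$. The $\P^{1}$-count of a quotient ring is really Dedekind's $\psi$ (compare Section~\ref{sec:psi}), and it agrees with $\sigma_{1}$ only on squarefree, i.e.\ log, cycles. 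Consequently $\sigma_{1,X} = \pi*\zeta$ does not actually follow from the definition $\sigma_{1,X}(\alpha) = \#\P^{1}(\orb_{X,\alpha}/I_{\alpha})$ at non-reduced cycles; this is a wrinkle in the source definition rather than in your mechanism. The displayed formula is saved by instead taking $\sigma_{1,X}$ to be the multiplicative function with local values $\sigma_{1,X}(ax) = \sum_{j=0}^{a} q^{j\deg(x)}$ (equivalently, defining $\sigma_{1,X} := \pi*\zeta$ outright), after which your computation goes through verbatim. You should therefore not cite the unimodular-pair count to justify the convolution identity, since it proves the wrong local formula.
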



\subsection{Global divisor sums}

Finally, since $\P^{1}$ is an arithmetic scheme with good reduction everywhere (given by $\P_{\F_{p}}^{1} = \P^{1}\times_{\Z}\Spec\F_{p}$), it makes sense to define a global divisor sum. 

\begin{defn}
For an arithmetic scheme $X$, the {\bf global first divisor sum} of $X$ is the formal power series 
$$
S_{1,X}(s) = \prod_{p\text{ prime}} S_{1}(X_{p},p^{-s})
$$
where for each prime $p$, 
$$
S_{1}(X_{p},t) = \sum_{\alpha\in Z_{0}^{\eff}(X_{p})} \sigma_{1,X_{p}}(\alpha)t^{\deg(\alpha)}. 
$$
\end{defn}

\begin{cor}
For any arithmetic scheme $X$, we have 
$$
S_{1,X}(s) = \zeta_{X}(s)\zeta_{X}(s - 1). 
$$
\end{cor}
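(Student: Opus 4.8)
The plan is to assemble the global first divisor sum from its local factors and identify each factor using Proposition~\ref{prop:sigma1variety}. Since $\P^1$ has good reduction at every prime, the special fiber $X_p$ is just $\P^1_{\F_p}$ for each $p$; wait — more precisely, for a general arithmetic scheme $X$ the fibers $X_p$ are $\F_p$-varieties, and the corollary concerns $S_{1,X}(s)$ built from those fibers. So the first step is simply to recall the definition $S_{1,X}(s) = \prod_{p} S_{1}(X_p, p^{-s})$ and note that each inner series $S_{1}(X_p,t) = \sum_{\alpha} \sigma_{1,X_p}(\alpha)t^{\deg(\alpha)}$ is exactly the object computed in Proposition~\ref{prop:sigma1variety}, applied to the variety $X_p/\F_p$.

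The second step is to invoke Proposition~\ref{prop:sigma1variety}, which gives $S_1(X_p,t) = Z(X_p,t)\,Z(X_p\times\A^1,t)$. Then I would use the standard relation $Z(X_p\times\A^1,t) = Z(X_p,qt)$ (with $q = p$ here), which was already used in the proof of Proposition~\ref{prop:HWphizetaformula}, to rewrite this as $S_1(X_p,t) = Z(X_p,t)\,Z(X_p,pt)$. Substituting $t = p^{-s}$ turns the two factors into the local Euler factors of $\zeta_X(s)$ and $\zeta_X(s-1)$ respectively, since $Z(X_p, p^{-(s-1)}) = Z(X_p, p\cdot p^{-s})$.

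The third step is to take the product over all primes $p$ and match it with the Euler product definitions $\zeta_X(s) = \prod_p Z(X_p,p^{-s})$ and $\zeta_X(s-1) = \prod_p Z(X_p,p^{-(s-1)})$. This yields
\[
S_{1,X}(s) = \prod_{p} Z(X_p,p^{-s})\,Z(X_p,p^{-(s-1)}) = \zeta_X(s)\,\zeta_X(s-1),
\]
as claimed. (Alternatively, one could bypass Proposition~\ref{prop:sigma1variety} and argue directly from $\sigma_{1,X_p} = \pi_* * \zeta$ fiberwise, then assemble globally — but reusing the already-proved proposition is cleaner.)

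The only real subtlety, and hence the step to be careful about, is bookkeeping with the shift: one must confirm that $Z(X_p\times\A^1,t)$ contributes the $\zeta_X(s-1)$ factor rather than $\zeta_X(s+1)$, i.e.\ that multiplying $t$ by $q$ corresponds to replacing $s$ by $s-1$ in $p^{-s}$. This is immediate since $p\cdot p^{-s} = p^{-(s-1)}$, but it is the place where an off-by-one error would creep in. Everything else is formal manipulation of Euler products, and convergence is not an issue because all identities are between formal Dirichlet series / formal power series.
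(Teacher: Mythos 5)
Your proof is correct and follows exactly the route the paper intends: the corollary is stated without an explicit proof precisely because it is the fiberwise Proposition~\ref{prop:sigma1variety} (together with $Z(X_p\times\A^{1},t) = Z(X_p,pt)$ and the Euler-product definition $\zeta_{X}(s) = \prod_{p} Z(X_p,p^{-s})$) assembled over all primes, which is what you do. Your attention to the sign of the shift, $p\cdot p^{-s} = p^{-(s-1)}$, is the right place to be careful and is handled correctly.
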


\begin{rem}
\label{rem:sigmamotivic}
As in Remark~\ref{rem:totientmotivic}, we can view the formulas above as shadows of the formula $[\P^{1}] = [\A^{1}] + [\text{point}]$ in the appropriate Grothendieck ring. Further, formula (\ref{eq:divisorsumDirichletseries}) also suggests generalizing the divisor sum functions $\sigma_{m}(n)$ by $\sigma_{m,X}(z) = \#(\orb/z)^{m} + 1$ for each choice of object $X$, ring $\orb$ and monoid $Z$ with $z\in Z$. That is, a divisor sum function is a shadow of a class of the form $[\A^{m}] + [\text{point}]$ in some Grothendieck ring. 
\end{rem}


\section{Psi Functions}
\label{sec:psi}

\subsection{Dedekind's $\psi$-function}

Dedekind originally introduced the following function to study modular forms, but it is an interesting arithmetic function in its own right. 
\begin{defn}
{\bf Dedekind's $\psi$-function} is the arithmetic function $\psi : \N\rightarrow\C$ defined by 
$$
\psi(n) = n\prod_{p\mid n} \left (1 + \frac{1}{p}\right ). 
$$
\end{defn}

From the definition, we can see that $\psi$ is multiplicative and for any prime $p$, $\psi(p) = p + 1$. In fact, these imply that for any squarefree integer $n$, $\psi(n) = \sigma_{1}(n)$. As arithmetic functions, $\psi = \id*|\mu|$, leading to the identity 
\begin{equation}
\sum_{n = 1}^{\infty} \frac{\psi(n)}{n^{s}} = \frac{\zeta_{\Q}(s)\zeta_{\Q}(s - 1)}{\zeta_{\Q}(2s)}. 
\end{equation}

On the other hand, $\psi(n)$ is the index of the congruence subgroup $\Gamma_{0}(n)$ in $SL_{2}(\Z)$, hence the connection to modular forms (or more directly, lattices). This interpretation is ripe for generalization, but let us first introduce a geometric interpretation of the $\psi$-function before moving to number fields, varieties, etc.

While the arithmetic functions $\phi$ and $\sigma_{1}$ had simple geometric interpretations, such an interpretation of $\psi$ is less obvious. After failing to find anything in the literature, we discovered the following interpretation, aided by a suggestion of David Zureick-Brown. 

First, notice that for a prime $p$, $\psi(p) = p + 1$ is the cardinality of the cyclic group $\F_{p^{2}}^{\times}/\F_{p}^{\times}$, where $\F_{q}$ denotes the field with $q$ elements. This generalizes as follows. For an integer $k\geq 1$ and a prime power $q = p^{a}$, let $\W_{k}(\F_{q})$ be the ring of $p$-typical Witt vectors of $\F_{q}$. Define the quotient groups 
$$
G(p^{k}) = \W_{k}(\F_{p^{2}})^{\times}/\W_{k}(\F_{p})^{\times}. 
$$

\begin{lem}
$\#G(p^{k}) = \psi(p^{k})$. 
\end{lem}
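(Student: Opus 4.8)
The plan is to compute $\#\W_k(\F_{p^2})^\times$ and $\#\W_k(\F_p)^\times$ separately and take the quotient, then compare with the known value $\psi(p^k) = p^{k-1}(p+1)$. The key structural fact is that for a finite field $\F_q$ with $q = p^a$, the Witt ring $\W_k(\F_q)$ is a finite local ring of order $q^k$ with residue field $\F_q$, and its maximal ideal $\mathfrak{m}$ is the kernel of the projection $\W_k(\F_q) \to \F_q$, so $\#\mathfrak{m} = q^{k-1}$. A unit of a finite local ring is precisely an element outside the maximal ideal, so $\#\W_k(\F_q)^\times = q^k - q^{k-1} = q^{k-1}(q-1)$.

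First I would recall (or cite, as this is standard Witt vector theory) that $\W_k(\F_q)$ is local of order $q^k$ with residue field $\F_q$; when $\F_q = \F_p$ this is just $\Z/p^k\Z$, and when $\F_q = \F_{p^2}$ it is the unramified degree-$2$ extension $\W_k(\F_p)[\zeta]$ for a primitive $(p^2-1)$-th root of unity $\zeta$, of order $p^{2k}$. Applying the unit count above with $q = p^2$ gives $\#\W_k(\F_{p^2})^\times = p^{2k-2}(p^2-1) = p^{2k-2}(p-1)(p+1)$, and with $q = p$ gives $\#\W_k(\F_p)^\times = p^{k-1}(p-1)$. Since $\W_k(\F_p)^\times$ embeds in $\W_k(\F_{p^2})^\times$ as a subgroup (functoriality of $\W_k$ applied to $\F_p \hookrightarrow \F_{p^2}$, which is injective on Witt rings), the quotient group $G(p^k)$ has order
$$
\#G(p^k) = \frac{p^{2k-2}(p-1)(p+1)}{p^{k-1}(p-1)} = p^{k-1}(p+1).
$$
Finally, I would note $\psi(p^k) = p^k(1 + p^{-1}) = p^{k-1}(p+1)$ directly from the defining product formula, matching the count.

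The only genuine subtlety — what I would treat as the main obstacle — is justifying that the map $\W_k(\F_p)^\times \to \W_k(\F_{p^2})^\times$ induced by the Frobenius-compatible inclusion $\F_p \hookrightarrow \F_{p^2}$ is actually injective, so that one really has a subgroup and Lagrange's theorem applies to give the order of the quotient. This follows because $\W_k$ is a functor and the composite $\W_k(\F_p) \to \W_k(\F_{p^2}) \to \W_k(\F_p)$ (the second map induced by a retraction on residue fields does not exist, so instead) one argues directly: on Witt components the inclusion acts coordinatewise via $\F_p \hookrightarrow \F_{p^2}$, which is injective, so the ring map $\W_k(\F_p) \to \W_k(\F_{p^2})$ is injective, hence injective on unit groups. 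With this in hand the computation is entirely routine.
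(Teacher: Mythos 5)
Your computation is correct and is exactly the ``standard formulas'' argument the paper invokes (it only cites $\#\W_{k}(\F_{q}) = q^{k}$ and $\#\W_{k}(\F_{q})^{\times} = q^{k-1}(q-1)$ and defers details to a reference); you have simply written out the details, including the quotient count $p^{2k-2}(p^{2}-1)/\bigl(p^{k-1}(p-1)\bigr) = p^{k-1}(p+1) = \psi(p^{k})$. The injectivity discussion, though slightly garbled mid-sentence, lands on a valid justification (the Witt functor applied to an injection of perfect fields is injective, and unit groups of commutative rings are abelian so the quotient is well defined), so there is no gap.
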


\begin{proof}
This follows from standard formulas for $\#\W_{k}(\F_{q})$ and $\#\W_{k}(\F_{q})^{\times}$. For more details, see \cite{kzb}.     
\end{proof}

For an arbitrary integer $n\geq 1$, with prime factorization $n = p_{1}^{k_{1}}\cdots p_{r}^{k_{r}}$, let 
$$
G(n) = \prod_{i = 1}^{r} G(p_{i}^{k_{i}}) = \prod_{i = 1}^{r} \W_{k_{i}}(\F_{p_{i}^{2}})^{\times}/\W_{k_{i}}(\F_{p_{i}})^{\times}. 
$$

\begin{cor}
$\#G(n) = \psi(n)$. 
\end{cor}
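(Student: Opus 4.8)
The plan is to reduce the corollary to the lemma by exploiting multiplicativity on both sides. First I would recall that $\psi$ is multiplicative, so for $n = p_1^{k_1}\cdots p_r^{k_r}$ with distinct primes $p_i$, we have $\psi(n) = \prod_{i=1}^r \psi(p_i^{k_i})$; this is immediate from the definition of $\psi$ or from the displayed product formula. Second, I would observe that $G(n)$ was \emph{defined} as the direct product $\prod_{i=1}^r G(p_i^{k_i})$, so its cardinality is literally $\#G(n) = \prod_{i=1}^r \#G(p_i^{k_i})$ by the multiplicativity of cardinality under finite products.

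Third, I would apply the Lemma to each factor: $\#G(p_i^{k_i}) = \psi(p_i^{k_i})$ for every $i$. Combining the three displayed equalities gives
\begin{equation*}
\#G(n) = \prod_{i=1}^r \#G(p_i^{k_i}) = \prod_{i=1}^r \psi(p_i^{k_i}) = \psi(n),
\end{equation*}
which is the claim.

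There is essentially no obstacle here: the corollary is a purely formal consequence of the Lemma together with the fact that both $\psi$ and $n \mapsto \#G(n)$ are multiplicative, the latter by construction. The only point worth a sentence of care is that the prime factorization of $n$ is unique, so $G(n)$ is well-defined and the product decomposition matches the one for $\psi$; this is standard and needs no elaboration. If one wanted a cleaner phrasing, one could simply say that both sides are multiplicative arithmetic functions agreeing on prime powers by the Lemma, hence agree everywhere.
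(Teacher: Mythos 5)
Your proof is correct and is exactly the argument the paper intends (the paper leaves the corollary without an explicit proof, since it follows immediately from the Lemma, the multiplicativity of $\psi$, and the definition of $G(n)$ as a product over prime powers). Nothing is missing.
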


\begin{question}
\label{q:bigwitt}
Is there a group scheme $\mathcal{G}/\Spec\Z$ such that $G(n) \cong \mathcal{G}(\Z/n\Z)$ for all $n$? For example, is there an interpretation of the groups $G(n)$ in terms of the ring of big Witt vectors? 
\end{question}

We have not been able to devise such a group scheme $\mathcal{G}$, so this question is left for the curious reader. Ideally, $\mathcal{G}$ would also provide a geometric explanation for the identity $\psi = \operatorname{id}*|\mu|$. 


\subsection{Number field $\psi$-function}

\begin{defn}
The {\bf $\psi$-function of a number field} $K/\Q$ is the function $\psi_{K} : I_{K}^{\times}\rightarrow\Z$ defined by 
$$
\psi_{K}(\frak{a}) = N(\frak{a})\prod_{\frak{p}\mid\frak{a}} \left (1 + \frac{1}{N(\frak{p})}\right ). 
$$
\end{defn}

We summarize the properties of $\psi_{K}$ here. 
\begin{prop}
\label{prop:numberfieldpsiprops}
Let $\frak{a},\frak{b},\frak{p}\subset\orb_{K}$ be ideals, with $\frak{p}$ prime. Then 
\begin{enumerate}[\quad (a)]
    \item If $\frak{a}$ and $\frak{b}$ are relatively prime, then $\psi_{K}(\frak{a}\frak{b}) = \psi_{K}(\frak{a})\psi_{K}(\frak{b})$. 
    \item $\psi_{K}(\frak{p}) = \sigma_{1,K}(\frak{p}) = N(\frak{p}) + 1$. In particular, if $\frak{a}$ is squarefree then $\psi_{K}(\frak{a}) = \sigma_{1,K}(\frak{a})$. 
    \item As arithmetic functions on $I_{K}^{\times}$, $\psi_{K} = N*|\mu_{K}|$, where $\mu_{K}$ is the M\"{o}bius function in $I(I_{K}^{\times})$. 
    \item $\psi(\frak{a}) = [SL_{2}(\orb_{K}) : \Gamma_{0}(\frak{a})]$, where $\Gamma_{0}(\frak{a})$ is the subgroup of matrices in $SL_{2}(\orb_{K})$ which are upper triangular mod $\frak{a}$. 
    \item $\psi_{K}(\frak{a}) = \#G(\frak{a})$ where 
    $$
    G(\frak{a}) := \prod_{\frak{p}^{k}\mid\mid\frak{a}} G(\frak{p}^{k}) := \prod_{\frak{p}^{k}\mid\mid\frak{a}} \W_{k}(\F_{N(\frak{p})^{2}})^{\times}/\W_{k}(\F_{N(\frak{p})})^{\times}. 
    $$
\end{enumerate}
\end{prop}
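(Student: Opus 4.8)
The strategy is to reduce everything to the prime-power case and then invoke the results already established for the classical $\psi$-function and for the number-field analogues of $\sigma_1$ and $\phi$. The five statements are interdependent, so I would prove them in the order (a), (b), (c), (d), (e), using earlier parts where convenient.

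For (a), the key observation is that if $\frak{a}$ and $\frak{b}$ are relatively prime ideals then the set of prime divisors of $\frak{a}\frak{b}$ is the disjoint union of those of $\frak{a}$ and those of $\frak{b}$, while $N$ is itself multiplicative; the product formula defining $\psi_K$ then factors as a product over the two disjoint sets of primes, giving $\psi_K(\frak{a}\frak{b}) = \psi_K(\frak{a})\psi_K(\frak{b})$. For (b), plug a prime ideal $\frak{p}$ into the definition: the only prime divisor is $\frak{p}$ itself, so $\psi_K(\frak{p}) = N(\frak{p})\left(1 + \tfrac{1}{N(\frak{p})}\right) = N(\frak{p}) + 1$, which equals $\sigma_{1,K}(\frak{p})$ by the earlier computation. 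Since both $\psi_K$ and $\sigma_{1,K}$ are multiplicative (part (a) for $\psi_K$, the footnote remark for $\sigma_{1,K}$), agreement on primes forces agreement on all squarefree ideals.

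For (c), I would mimic the proof of the recursive formula for $\phi_K$. Starting from the product formula for $\psi_K(\frak{a})$, expand $\prod_{\frak{p}\mid\frak{a}}\left(1 + \tfrac{1}{N(\frak{p})}\right)$ as a sum over squarefree divisors $\frak{d}$ of $\frak{a}$, which introduces $|\mu_K(\frak{d})|$ as the coefficient (since $\mu_K$ is supported on squarefree ideals and takes values $\pm 1$ there); this gives $\psi_K(\frak{a}) = N(\frak{a})\sum_{\frak{d}\mid\frak{a}} \tfrac{|\mu_K(\frak{d})|}{N(\frak{d})} = \sum_{\frak{d}\mid\frak{a}} |\mu_K(\frak{d})| N(\frak{a}\frak{d}^{-1})$, which is exactly $N*|\mu_K|$ in $I(I_K^{\times})$. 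Part (d) is the step I expect to be the main obstacle, since unlike the others it is not a purely combinatorial manipulation: one must identify the index $[SL_2(\orb_K):\Gamma_0(\frak{a})]$ with $\psi_K(\frak{a})$. The cleanest route is to use that $SL_2(\orb_K)\to SL_2(\orb_K/\frak{a})$ is surjective (strong approximation, or the fact that $\orb_K$ is a Dedekind domain so $SL_2$ satisfies the relevant surjectivity onto quotient rings) and that $\Gamma_0(\frak{a})$ is the preimage of the Borel subgroup $B(\orb_K/\frak{a})$ of upper-triangular matrices in $SL_2(\orb_K/\frak{a})$; hence the index equals $[SL_2(\orb_K/\frak{a}):B(\orb_K/\frak{a})]$, which by Sun Tzu's Theorem reduces to the prime-power case, and for $\frak{a} = \frak{p}^k$ counts the points of $\P^1$ over the local ring $\orb_K/\frak{p}^k$, giving $N(\frak{p})^{k-1}(N(\frak{p})+1) = \psi_K(\frak{p}^k)$. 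One must be a little careful that the reduction map on $SL_2$ is surjective and that the quotient $SL_2/B$ is genuinely $\P^1$ over a (possibly non-reduced) finite ring, which is where the technical attention is needed. Finally, (e) follows from the Corollary to the Lemma in the previous subsection applied prime-by-prime: for $\frak{p}^k \mid\mid \frak{a}$ one has $\#\big(\W_k(\F_{N(\frak{p})^2})^{\times}/\W_k(\F_{N(\frak{p})})^{\times}\big) = \psi(N(\frak{p})^k)$-style formula $= N(\frak{p})^{k-1}(N(\frak{p})+1) = \psi_K(\frak{p}^k)$, and multiplying over all $\frak{p}^k \mid\mid \frak{a}$ and using (a) gives $\#G(\frak{a}) = \psi_K(\frak{a})$.
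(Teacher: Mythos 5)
Your proposal is correct, and for parts (a)--(c) and (e) it supplies the routine verifications that the paper simply waves at (``follow from the definition'', ``the same standard formulas''); your expansions are the intended ones. The genuine divergence is in (d). The paper proves the index formula by counting cyclic $\orb_{K}$-submodules of $E[\frak{a}]$ isomorphic to $\orb_{K}/\frak{a}$ for an elliptic curve $E$ over $K$, i.e.\ via the moduli interpretation of $\Gamma_{0}(\frak{a})$; you instead use surjectivity of $SL_{2}(\orb_{K})\rightarrow SL_{2}(\orb_{K}/\frak{a})$ (which does hold: $\orb_{K}/\frak{a}$ is semilocal, so its $SL_{2}$ is generated by elementary matrices, all of which lift) together with the orbit--stabilizer identification $[SL_{2}(\orb_{K}/\frak{a}):B(\orb_{K}/\frak{a})] = \#\P^{1}(\orb_{K}/\frak{a})$. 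Both are standard; yours is self-contained and purely algebraic, while the paper's connects $\psi_{K}$ to lattices and modular curves, which is the motivation it emphasizes. One point in your route deserves explicit care: your count of $\P^{1}(\orb_{K}/\frak{p}^{k})$ as unimodular rows modulo units correctly gives $N(\frak{p})^{k-1}(N(\frak{p})+1) = \psi_{K}(\frak{p}^{k})$, but this is \emph{not} the quantity $\sigma_{1,K}(\frak{p}^{k}) = 1 + N(\frak{p}) + \cdots + N(\frak{p})^{k}$ that the paper earlier labels $\#\P^{1}(\orb_{K}/\frak{p}^{k})$; the two notions of ``points of $\P^{1}$ over a non-reduced finite ring'' (locally free rank-one quotients versus all cyclic submodules/sublattices) differ, and your argument needs the former. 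Flagging which convention you are using there would close the only loose end.
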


\begin{proof}
(a) -- (c) follow from the definition. (d) is proven in the usual way, i.e.~by counting the number of cyclic submodules of rank $N(\frak{a})$ in $E[\frak{a}]$, where $E$ is an elliptic curve over $K$ (equivalently, cyclic submodules in $E_{tors}$ which are isomorphic as $\orb_{K}$-modules to $\orb_{K}/\frak{a}$). Finally, (e) follows from the same standard formulas as above. 
\end{proof}

\begin{cor}
The Dirichlet series for $\psi_{K}$ satisfies the formula 
$$
\sum_{\frak{a}\subset\orb_{K}} \frac{\psi_{K}(\frak{a})}{N(\frak{a})^{s}} = \frac{\zeta_{K}(s)\zeta_{K}(s - 1)}{\zeta_{K}(2s)}
$$
where $\zeta_{K}(s)$ is the Dedekind zeta function of $K$. 
\end{cor}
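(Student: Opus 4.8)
The plan is to translate the convolution identity of Proposition~\ref{prop:numberfieldpsiprops}(c) into a product of Dirichlet series and then identify each factor. Since $\psi_{K} = N*|\mu_{K}|$ in the incidence algebra $I(I_{K}^{\times})$, and since passing to Dirichlet series (via the pushforward along $N : I_{K}^{\times}\rightarrow\N^{\times}$, as recalled before Proposition~\ref{prop:Dedekindphizetaformula}) is an algebra homomorphism from $I(I_{K}^{\times})$ to the ring of formal Dirichlet series, we immediately get
$$
\sum_{\frak{a}\subset\orb_{K}} \frac{\psi_{K}(\frak{a})}{N(\frak{a})^{s}} = \left (\sum_{\frak{a}\subset\orb_{K}} \frac{N(\frak{a})}{N(\frak{a})^{s}}\right )\left (\sum_{\frak{a}\subset\orb_{K}} \frac{|\mu_{K}(\frak{a})|}{N(\frak{a})^{s}}\right ).
$$
The first factor is $\sum_{\frak{a}} N(\frak{a})^{-(s-1)} = \zeta_{K}(s-1)$, exactly as in the proof of Proposition~\ref{prop:Dedekindphizetaformula}.

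The remaining step is to show $\sum_{\frak{a}} |\mu_{K}(\frak{a})| N(\frak{a})^{-s} = \zeta_{K}(s)/\zeta_{K}(2s)$. Here I would use that $|\mu_{K}|$ is multiplicative (being the absolute value of the multiplicative function $\mu_{K}$) and that $|\mu_{K}(\frak{p}^{k})| = 1$ for $k\in\{0,1\}$ and $0$ for $k\geq 2$; equivalently, $|\mu_{K}|$ is the indicator function of squarefree ideals. Unique factorization of ideals in $\orb_{K}$ then gives the Euler product
$$
\sum_{\frak{a}\subset\orb_{K}} \frac{|\mu_{K}(\frak{a})|}{N(\frak{a})^{s}} = \prod_{\frak{p}} \left (1 + \frac{1}{N(\frak{p})^{s}}\right ) = \prod_{\frak{p}} \frac{1 - N(\frak{p})^{-2s}}{1 - N(\frak{p})^{-s}} = \frac{\zeta_{K}(s)}{\zeta_{K}(2s)},
$$
where the last equality uses the Euler product $\zeta_{K}(s) = \prod_{\frak{p}}(1 - N(\frak{p})^{-s})^{-1}$. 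Multiplying the two factors yields the claimed formula.

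The only real content beyond bookkeeping is the Euler-product manipulation for $|\mu_{K}|$, and even that is a direct analogue of the classical identity $\sum_{n} |\mu(n)| n^{-s} = \zeta_{\Q}(s)/\zeta_{\Q}(2s)$. So I expect no genuine obstacle; the main thing to be careful about is that all manipulations take place at the level of \emph{formal} Dirichlet series (or, if one prefers analytic statements, in the region of absolute convergence), so that rearranging into Euler products and cancelling $\zeta_{K}(2s)$ is legitimate. One could alternatively avoid Euler products entirely by observing $\zeta_{\Q}$-style that $|\mu_{K}| = \mu_{K}^{2}$ and $\mu_{K}^{2} = \zeta\circ(\text{square-indicator}) \cdots$, but the Euler-product route is cleanest given what has already been established.
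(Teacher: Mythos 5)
Your proposal is correct and follows essentially the same route as the paper: both deduce the result from Proposition~\ref{prop:numberfieldpsiprops}(c) together with the identity $\sum_{\frak{a}} |\mu_{K}(\frak{a})| N(\frak{a})^{-s} = \zeta_{K}(s)/\zeta_{K}(2s)$. The only difference is that the paper simply cites this last identity as a straightforward generalization of the classical one over $\Q$, whereas you supply the Euler-product verification explicitly.
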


\begin{proof}
This follows from (c) and the identity 
$$
\sum_{\frak{a}\subset\orb_{K}} \frac{|\mu_{K}(\frak{a})|}{N(\frak{a})^{s}} = \frac{\zeta_{K}(s)}{\zeta_{K}(2s)}
$$
which is a straightforward generalization of the same formula over $\Q$. 
\end{proof}


\subsection{Geometric $\psi$-functions}

\begin{defn}
The {\bf $\psi$-function of a variety} $X/\F_{q}$ is the function $\psi_{X} : Z_{0}^{\eff}(X)\rightarrow\Z$ defined by 
$$
\psi_{X}(\alpha) = q^{\deg(\alpha)}\prod_{i = 1}^{r} (1 + q^{-\deg(x_{i})}). 
$$
\end{defn}

Once again, we have the following properties of $\psi_{X}$. 
\begin{prop}
Let $\alpha,\beta\in Z_{0}^{\eff}(X)$ and let $x\in|X|$. Then 
\begin{enumerate}[\quad (a)]
    \item If $\alpha$ and $\beta$ are relatively prime, then $\psi_{X}(\alpha + \beta) = \psi_{X}(\alpha)\psi_{X}(\beta)$. 
    \item $\psi_{X}(x) = \sigma_{1,X}(x) = q^{\deg(x)} + 1$. In particular, if $\alpha$ is a {\it log} effective $0$-cycle\footnote{A log cycle is one for which each irreducible cycle appears with coefficient $1$.} then $\psi_{X}(\alpha) = \sigma_{1,X}(\alpha)$. 
    \item As arithmetic functions on $Z_{0}^{\eff}(X)$, $\psi_{X} = \pi*|\mu_{X}|$, where $\mu_{X}$ is the M\"{o}bius function in $I(Z_{0}^{\eff}(X))$. 
    \item $\psi(\alpha) = [SL_{2}(\orb_{X,\alpha}) : \Gamma_{0}(I_{\alpha})]$, where $\Gamma_{0}(I_{\alpha})$ is the subgroup of matrices in $SL_{2}(\orb_{X,\alpha})$ which are upper triangular mod $I_{\alpha}$. 
    \item $\psi_{X}(\alpha) = \#G(\alpha)$ where 
    $$
    G(\alpha) := \prod_{kx\in\alpha} G(kx) := \prod_{kx\in\alpha} \W_{k}(\F_{q^{2\deg(\alpha)}})^{\times}/\W_{k}(\F_{q^{\deg(\alpha)}})^{\times}. 
    $$
\end{enumerate}
\end{prop}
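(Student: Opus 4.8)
The plan is to mirror, step for step, the proof of Proposition~\ref{prop:numberfieldpsiprops} in the number field case, replacing the ideal norm $N$ by the degree-exponential $\pi = q^{\deg(-)}$ and the monoid $I_K^\times$ by $Z_0^{\eff}(X)$. Parts (a)--(c) are formal. For (a), if $\alpha$ and $\beta$ have disjoint support, then every closed point appearing in $\alpha+\beta$ appears in exactly one of them, so the product $\prod(1+q^{-\deg(x_i)})$ over the support of $\alpha+\beta$ splits as the product over $\supp(\alpha)$ times the product over $\supp(\beta)$, and $q^{\deg(\alpha+\beta)} = q^{\deg(\alpha)}q^{\deg(\beta)}$ since $\deg$ is additive; multiplying gives $\psi_X(\alpha+\beta)=\psi_X(\alpha)\psi_X(\beta)$. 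For (b), plug $\alpha = x$ (a single reduced point) into the definition: $\psi_X(x) = q^{\deg(x)}(1+q^{-\deg(x)}) = q^{\deg(x)}+1 = \sigma_{1,X}(x)$; since both $\psi_X$ and $\sigma_{1,X}$ are multiplicative over relatively prime cycles and a log cycle is a sum of distinct reduced points, they agree on all log cycles. For (c), I would expand $\prod_{i=1}^r (1+q^{-\deg(x_i)})$ as a sum over the $2^r$ square-free sub-cycles of $\alpha$ (i.e.\ $\sum_{\beta}\, q^{-\deg(\beta)}$ where $\beta$ ranges over sums of distinct points from $\supp(\alpha)$), recognize the coefficient pattern as $|\mu_X(\beta)|$ (which is $1$ on log cycles and $0$ otherwise), and thus write
$$
\psi_X(\alpha) = q^{\deg(\alpha)}\sum_{\beta\leq\alpha} |\mu_X(\beta)|\, q^{-\deg(\beta)} = \sum_{\beta\leq\alpha} |\mu_X(\beta)|\, q^{\deg(\alpha-\beta)} = \sum_{\beta\leq\alpha} \pi(\alpha-\beta)\,|\mu_X(\beta)|,
$$
which is exactly $(\pi * |\mu_X|)(\alpha)$. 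This is the same manipulation used in the proofs of the totient and divisor-sum recursions earlier in the paper.

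For (d), the argument is the standard one for the index $[SL_2(R):\Gamma_0(I)]$ when $R/I$ is a finite ring: the cosets of $\Gamma_0(I_\alpha)$ in $SL_2(\orb_{X,\alpha})$ are in bijection with the set of cyclic $\orb_{X,\alpha}/I_\alpha$-submodules of $(\orb_{X,\alpha}/I_\alpha)^2$ that are free of rank one as $\orb_{X,\alpha}/I_\alpha$-modules — equivalently, lines in the projective space over the finite ring $\orb_{X,\alpha}/I_\alpha$ — and one counts these using multiplicativity over the local factors together with the local count $1 + q^{\deg(x)} + \cdots$ at each point. Concretely, by Sun Tzu's Theorem $\orb_{X,\alpha}/I_\alpha \cong \prod_i \orb_{X,x_i}/\frak{m}_{x_i}^{a_i}$, so the count is multiplicative and reduces to a single local ring $\orb_{X,x}/\frak{m}_x^{k}$, where the number of such cyclic submodules is $q^{\deg(x)(k-1)}(q^{\deg(x)}+1)$; assembling the local factors recovers $\psi_X(\alpha)$. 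Equivalently, one can phrase (d) via cyclic subgroups of the $I_\alpha$-torsion of an elliptic curve over the function field, exactly as in part (d) of Proposition~\ref{prop:numberfieldpsiprops}.

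For (e), I would invoke the lemma $\#G(p^k) = \psi(p^k)$ (and its number-field analogue, Proposition~\ref{prop:numberfieldpsiprops}(e)), which rests on the standard cardinality formulas $\#\W_k(\F_{q^d})^\times = q^{d(k-1)}(q^d - 1)$ and $\#\W_k(\F_{q^d}) = q^{dk}$. At a single closed point $x$ with $\deg(x) = d$ appearing in $\alpha$ with multiplicity $k$, the local factor is $\#\bigl(\W_k(\F_{q^{2d}})^\times / \W_k(\F_{q^d})^\times\bigr) = q^{dk}(1 + q^{-d}) = q^{\deg(kx)}(1 + q^{-\deg(x)})$; taking the product over $kx \in \alpha$ and using multiplicativity of both sides gives $\#G(\alpha) = \psi_X(\alpha)$. (I note in passing that the exponents in the displayed definition of $G(\alpha)$ in the statement should be read locally — $\F_{q^{2\deg(x)}}$ and $\F_{q^{\deg(x)}}$ at the point $x$ — rather than with the global $\deg(\alpha)$, since otherwise multiplicativity over the support fails; I would correct this typo in the final version.)

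The only genuine obstacle is part (d): unlike (a)--(c) and (e), it is not a bookkeeping identity but requires either the submodule-counting bijection for $SL_2$ over a finite (possibly non-reduced, non-local) commutative ring, or the elliptic-curve torsion interpretation over a global function field. The cleanest route is to reduce via Sun Tzu to the local case $\orb_{X,x}/\frak{m}_x^k$ and count there directly, so the crux is verifying the local index formula $[SL_2(\orb_{X,x}/\frak{m}_x^k) : \Gamma_0] = q^{\deg(x)(k-1)}(q^{\deg(x)}+1)$ — which is classical for $\Z/p^k\Z$ and carries over verbatim since only the residue field size $q^{\deg(x)}$ enters.
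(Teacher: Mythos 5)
Your proposal is correct and follows essentially the same route as the paper's (very terse) proof: (a)--(c) by direct manipulation of the defining product exactly as in the number field case, (d) by counting cyclic submodules of $(\orb_{X,\alpha}/I_{\alpha})^{2}$ isomorphic to $\orb_{X,\alpha}/I_{\alpha}$ after reducing to the local rings $\orb_{X,x}/\frak{m}_{x}^{k}$, and (e) from the standard cardinality formulas for $\W_{k}(\F_{q^{d}})$ and its unit group. You have merely supplied the details the paper omits, and your observation that the exponents in the displayed definition of $G(\alpha)$ should read $\deg(x)$ rather than $\deg(\alpha)$ is a correct identification of a typo in the statement.
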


\begin{proof}
(a) -- (c) and (e) are the same as in the proof of Proposition~\ref{prop:numberfieldpsiprops}. For (e), counting sublattices in $(\orb_{X,\alpha}/I_{\alpha})^{2}$ isomorphic to $\orb_{X,\alpha}/I_{\alpha}$ gives the result. 
\end{proof}

\begin{cor}
The power series for $\psi_{X}$ satisfies 
$$
\sum_{\alpha\in Z_{0}^{\eff}(X)} \psi_{X}(\alpha)t^{\deg(\alpha)} = \frac{Z(X,t)Z(X,qt)}{Z(X,t^{2})} = \frac{Z(X,t)Z(X\times\A^{1},t)}{Z(X_{\F_{q^{2}}},t)}
$$
where $X_{\F_{q^{2}}} = X\times_{\Spec\F_{q}}\Spec\F_{q^{2}}$. 
\end{cor}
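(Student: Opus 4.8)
The plan is to run the same procedure used in Propositions~\ref{prop:HWphizetaformula} and~\ref{prop:sigma1variety}. The assignment $f \mapsto \sum_{\alpha \in Z_0^{\eff}(X)} f(\alpha) t^{\deg(\alpha)}$ is a $k$-algebra homomorphism $I(Z_0^{\eff}(X)) \to k[[t]]$: this is exactly the power-series translation of convolution already used in the proof of Proposition~\ref{prop:HWphizetaformula}, and it is well defined because $X$ has only finitely many closed points of each degree. Feeding the identity $\psi_X = \pi * |\mu_X|$ from part~(c) of the preceding proposition into this homomorphism yields
$$
\sum_{\alpha} \psi_X(\alpha) t^{\deg(\alpha)} = \left( \sum_{\alpha} \pi(\alpha) t^{\deg(\alpha)} \right)\left( \sum_{\alpha} |\mu_X(\alpha)| t^{\deg(\alpha)} \right),
$$
so that it remains to identify the two factors on the right.

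The first factor is immediate: since $\pi(\alpha) = q^{\deg(\alpha)}$ and the Euler product for the Hasse--Weil zeta function gives $Z(X,t) = \sum_{\alpha} t^{\deg(\alpha)}$, the first factor is $\sum_{\alpha} (qt)^{\deg(\alpha)} = Z(X,qt)$. For the second factor I would first pin down the support of $|\mu_X|$: because $Z_0^{\eff}(X)$ is the free commutative monoid on the set of closed points of $X$, its Möbius function is multiplicative over closed points and on each $\N$-summand takes the values $1,-1,0,0,\ldots$; hence $|\mu_X(\alpha)| = 1$ precisely when $\alpha$ is multiplicity-free (a ``log'' cycle) and $0$ otherwise. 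Summing the resulting Euler product,
$$
\sum_{\alpha} |\mu_X(\alpha)| t^{\deg(\alpha)} = \prod_{x \in |X|} \left( 1 + t^{\deg(x)} \right) = \prod_{x \in |X|} \frac{1 - t^{2\deg(x)}}{1 - t^{\deg(x)}} = \frac{Z(X,t)}{Z(X,t^2)},
$$
the power-series analogue of the identity $\sum_{\frak{a}} |\mu_K(\frak{a})| N(\frak{a})^{-s} = \zeta_K(s)/\zeta_K(2s)$ invoked for number fields in the previous subsection. Multiplying the two factors gives $\sum_{\alpha} \psi_X(\alpha) t^{\deg(\alpha)} = Z(X,t)Z(X,qt)/Z(X,t^2)$, the first claimed expression.

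To reach the second form of the answer I would then substitute the identity $Z(X,qt) = Z(X \times \A^1, t)$ already used in the proof of Proposition~\ref{prop:HWphizetaformula} into the numerator, and the base-change relation $Z(X,t^2) = Z(X_{\F_{q^2}},t)$ into the denominator, the latter obtained by matching up, in the Euler product, the closed points of $X_{\F_{q^2}}$ lying over a given closed point of $X$ according to their residue degrees. I expect the main obstacle to be precisely this last step together with the bookkeeping around the support of $|\mu_X|$: once part~(c) of the preceding proposition is available, identifying the $\pi$-factor and verifying the $|\mu_X|$ Euler product are routine, so the care needed is in the closed-point-by-closed-point comparison of zeta functions under base change to $\F_{q^2}$.
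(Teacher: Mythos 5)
The paper gives no proof of this corollary, and your argument for the \emph{first} equality is exactly the intended one: push $\psi_{X} = \pi*|\mu_{X}|$ through the homomorphism $f\mapsto\sum_{\alpha}f(\alpha)t^{\deg(\alpha)}$, identify the $\pi$-factor as $Z(X,qt)$, and compute $\sum_{\alpha}|\mu_{X}(\alpha)|t^{\deg(\alpha)} = \prod_{x\in|X|}(1+t^{\deg(x)}) = Z(X,t)/Z(X,t^{2})$ from the support of $|\mu_{X}|$ on multiplicity-free cycles. All of that is correct and complete.

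The step you deferred, however, is not merely delicate --- it fails. The identity $Z(X,t^{2}) = Z(X_{\F_{q^{2}}},t)$ is false in general, under either reading of $Z(X_{\F_{q^{2}}},t)$. A closed point $x\in|X|$ of even degree $d$ does not stay irreducible after base change: $\F_{q^{d}}\otimes_{\F_{q}}\F_{q^{2}}\cong\F_{q^{d}}\times\F_{q^{d}}$, so $x$ contributes an Euler factor $(1-t^{d/2})^{-2}$ (over $\F_{q^{2}}$) or $(1-t^{d})^{-2}$ (over $\F_{q}$) rather than the required $(1-t^{2d})^{-1}$; only points of odd degree behave as your ``matching up by residue degrees'' plan assumes. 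Concretely, for $X=\A^{1}$ one has $Z(X,t^{2}) = (1-qt^{2})^{-1}$, while $Z(X_{\F_{q^{2}}}/\F_{q^{2}},t) = (1-q^{2}t)^{-1}$ and $Z(X_{\F_{q^{2}}}/\F_{q},t) = (1-q^{2}t^{2})^{-1}$, so no substitution rescues the second displayed expression. This is a defect in the statement itself rather than in your argument: the first expression $Z(X,t)Z(X,qt)/Z(X,t^{2})$ is the correct one (it is what feeds into $\zeta_{X}(2s)$ in Corollary~\ref{cor:globalpsi}), and your proof should simply stop there, replacing the base-change claim for the denominator with the trivial substitution $t\mapsto t^{2}$ in the Euler product; the numerator substitution $Z(X,qt)=Z(X\times\A^{1},t)$ remains valid.
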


We can extend this to the global setting by putting 
$$
\Psi_{X}(s) = \prod_{p\text{ prime}} \Psi(X_{p},p^{-s})
$$
for any $X/\Spec\Z$, where for each prime $p$, 
$$
\Psi(X_{p},t) = \sum_{\alpha\in Z_{0}^{\eff}(X_{p})} \psi_{X_{p}}(\alpha)t^{\deg(\alpha)}. 
$$

\begin{cor}
\label{cor:globalpsi}
For any arithmetic scheme $X$, we have 
$$
\Psi_{X}(s) = \frac{\zeta_{X}(s)\zeta_{X\times\A^{1}}(s)}{\zeta_{X}(2s)}. 
$$
\end{cor}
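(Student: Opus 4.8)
The plan is to assemble the global statement from the local one prime-by-prime, exactly as the global totient and divisor-sum formulas (\ref{eq:arithphizeta}) and the corollary in Section~\ref{sec:sigmak} were assembled. First I would recall from the previous corollary that for each prime $p$, setting $q = p$ and $X_p = X\times_{\Z}\Spec\F_p$, the local generating function satisfies
$$
\Psi(X_p,t) = \frac{Z(X_p,t)\,Z(X_p,pt)}{Z(X_p,t^2)}.
$$
Substituting $t = p^{-s}$ turns the three factors into $\zeta_{X_p}(s)$, $\zeta_{X_p}(s-1)$ and $\zeta_{X_p}(2s)$ respectively, using the standard identification $Z(X_p,p^{-s}) = \zeta_{X_p}(s)$ of the local zeta factor with the Hasse--Weil Euler factor at $p$, together with $Z(X_p,pt)\big|_{t=p^{-s}} = Z(X_p,p^{-(s-1)})$ and $Z(X_p,t^2)\big|_{t=p^{-s}} = Z(X_p,p^{-2s})$.

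Next I would take the product over all primes $p$. By definition $\Psi_X(s) = \prod_p \Psi(X_p,p^{-s})$, and by definition of the Hasse--Weil zeta function of an arithmetic scheme as an Euler product, $\zeta_X(s) = \prod_p \zeta_{X_p}(s)$, and likewise $\zeta_X(s-1) = \prod_p \zeta_{X_p}(s-1)$ and $\zeta_X(2s) = \prod_p \zeta_{X_p}(2s)$. Since the local identity holds factor-by-factor and all three Euler products converge in a common right half-plane (or are manipulated purely formally as Dirichlet series), multiplying through gives
$$
\Psi_X(s) = \prod_p \frac{\zeta_{X_p}(s)\,\zeta_{X_p}(s-1)}{\zeta_{X_p}(2s)} = \frac{\zeta_X(s)\,\zeta_X(s-1)}{\zeta_X(2s)}.
$$
Finally, to match the stated form I would invoke the identity $\zeta_{X\times\A^1}(s) = \zeta_X(s-1)$, which is the global counterpart of $Z(X_p\times\A^1,t) = Z(X_p,pt)$ used in Proposition~\ref{prop:HWphizetaformula} (applied to each fiber $X_p$ and multiplied over $p$). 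This rewrites the numerator as $\zeta_X(s)\,\zeta_{X\times\A^1}(s)$, yielding the claimed formula.

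The only genuine subtlety — and the step I would flag as the main obstacle — is bookkeeping about convergence versus formal manipulation: one should either work in a half-plane where all the relevant Euler products converge absolutely (which holds for $\re(s)$ large depending on $\dim X$) and invoke uniqueness of the resulting Dirichlet series, or else phrase everything as an identity of formal Euler products, being careful that $\Psi_X(s)$ is \emph{a priori} only defined as such a product. Either way, the argument is a routine consequence of the local corollary together with the definitions of the global objects; no new input beyond what is already established in the excerpt is needed.
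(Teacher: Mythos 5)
Your proposal is correct and matches the paper's intent exactly: the paper states this corollary without proof, treating it as the immediate consequence of the fiberwise identity $\Psi(X_p,t) = Z(X_p,t)Z(X_p,pt)/Z(X_p,t^2)$ under the substitution $t = p^{-s}$ and the Euler-product definitions of $\Psi_X(s)$ and $\zeta_X(s)$, which is precisely your argument. Your closing remark on formal versus convergent Euler products is a reasonable extra caution but not something the paper addresses.
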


As suggested by Question~\ref{q:bigwitt}, it would be desirable to have a motivic explanation for Corollary~\ref{cor:globalpsi}, along the lines of Remarks~\ref{rem:totientmotivic} and~\ref{rem:sigmamotivic}.


\section{Other Arithmetic Functions}
\label{sec:other}

\subsection{Liouville's $\lambda$-function}
\label{sec:lambda}

\begin{defn}
{\bf Liouville's $\lambda$-function} is the arithmetic function $\lambda : \N\rightarrow\C$ defined by $\lambda(1) = 1$ and 
$$
\lambda(n) = \begin{cases}
  1, &\text{if $n$ is a product of an even number of primes}\\
  -1, &\text{if $n$ is a product of an odd number of primes}. 
\end{cases}
$$
That is, $\lambda(n) = (-1)^{\Omega(n)}$ where $\Omega\left (\prod p_{i}^{a_{i}}\right ) = \sum a_{i}$. 
\end{defn}

It is well-known that $\lambda$ is completely multiplicative: $\lambda(nm) = \lambda(n)\lambda(m)$ for \emph{any} $n,m\in\N$. We also have that $\lambda(n) = \mu(n')$ where $n'$ is the squarefree part of $n$, i.e.~the unique squarefree positive integer $n'$ such that $n = a^{2}n'$ for some $a\in\N$. As a consequence, 
\begin{equation}
\sum_{d\mid n} \lambda(d) = \begin{cases}
  1, &\text{if $n$ is a square}\\
  0, &\text{otherwise}. 
\end{cases}
\end{equation}
Applying M\"{o}bius inversion yields 
\begin{equation}
\lambda(n) = \sum_{d^{2}\mid n} \mu(n/d^{2}). 
\end{equation}

Another feature is that $\lambda*|\mu| = \delta$, i.e.~$\lambda$ is the convolution inverse of $|\mu|$, just as $\zeta$ is the convolution inverse of $\mu$. On the level of Dirichlet series, we have 
\begin{equation}
\sum_{n = 1}^{\infty} \frac{\lambda(n)}{n^{s}} = \frac{\zeta_{\Q}(2s)}{\zeta_{\Q}(s)}. 
\end{equation}

\begin{problem}
\label{prob:lambda}
Extend $\lambda$ to a number field $K/\Q$ by setting 
$$
\lambda_{K}(\frak{a}) = (-1)^{\Omega(\frak{a})}
$$
where $\Omega\left (\prod \frak{p}_{i}^{a_{i}}\right ) = \sum a_{i}$. Show $\lambda_{K}*|\mu_{K}| = \delta$ in the incidence algebra $I(I_{K}^{\times})$, which implies 
$$
\sum_{\frak{a}\subset\orb_{K}} \frac{\lambda_{K}(\frak{a})}{N(\frak{a})^{s}} = \frac{\zeta_{K}(2s)}{\zeta_{K}(s)}. 
$$
Repeat for a variety $X/\F_{q}$ and for a scheme $X/\Spec\Z$. 
\end{problem}

\begin{problem}
If possible, find a class $\xi$ in the Grothendieck ring of schemes such that $\#\xi(\Z/n\Z) = \lambda(n)$ and similarly for each generalization of $\psi$ in Problem~\ref{prob:lambda}. 
\end{problem}


\subsection{Sums of squares}
\label{sec:sumofsquares}

A famous number theory identity says that 
\begin{equation}
r_{2}(n) = 4\sum_{d\mid n} \chi_{-1}(d)
\end{equation}
where $r_{2}(n)$ counts the number of ways $n$ can be written as a sum of two squares and $\chi_{-1} = \legen{-1}{\cdot}$ is unique nontrivial quadratic Dirichlet character mod $4$. That is, $r_{2} = 4\zeta*\chi_{-1}$. 

\begin{question}
Does this identity generalize to (i) number fields? (ii) higher dimensional geometric objects? (iii) a formula in some Grothendieck ring? What about formulas involving the sum of squares functions $r_{k}(n)$ for $k\geq 3$? 
\end{question}



\end{document}